\documentclass[12pt]{amsart}
\usepackage[utf8]{inputenc}
\usepackage{amssymb, eurosym}
\usepackage{amsmath}
\usepackage{amsthm}
\usepackage[left=2cm,right=2cm]{geometry}
\usepackage{amsfonts}
\usepackage[all]{xy} 

\theoremstyle{definition}
\newtheorem{defi}{Definition}[subsection]

\newtheorem{teo}[defi]{Theorem}
\newtheorem{cor}[defi]{Corollary}

\theoremstyle{definition}
\newtheorem{obs}[defi]{Remark}

\DeclareMathOperator{\rad}{\operatorname{rad}}
\DeclareMathOperator{\md}{\operatorname{mod}}
\DeclareMathOperator{\ind}{\operatorname{ind}}

\DeclareMathOperator{\Hom}{\operatorname{Hom}}

\DeclareMathOperator{\Ga}{\Gamma}
\DeclareMathOperator{\Rr}{\mathcal{R}}
\DeclareMathOperator{\tGa}{\tilde{\Gamma}}

\title[Riedtmann's functors]{On Riedtmann's well-behaved functors and applications to composites of irreducible morphisms}
\author[Chust]{Viktor Chust}
\address{(Viktor Chust) Institute of Mathematics and Statistics - University of São Paulo, São Paulo, Brazil}
\email{viktorchust.math@gmail.com }

\author[Coelho]{Fl\'avio U. Coelho}
\address{(Flávio U. Coelho) Institute of Mathematics and Statistics - University of São Paulo, São Paulo, Brazil}
\email{fucoelho@ime.usp.br}
\thanks{corresponding author: fucoelho@ime.usp.br}
\date{}

\begin{document}

\subjclass[2020]{Primary 16G70, Secondary 16G20, 16B50}

\keywords{coverings, Riedtmann functors, paths of irreducible morphisms, mesh category}
\maketitle

\begin{abstract}
    In this survey, we summarize some results in the literature involving the mesh category, which is a combinatorial representation of the category of modules over a finite-dimensional associative algebra. We discuss Riedtmann's well-behaved functors, which compare the mesh category with the module category, and discuss how the properties of these functors can be applied to study the problem of composing irreducible morphisms, which is the problem of deciding when the composition of $n$ irreducible morphisms is non-zero and lies on the ($n+1$)-th power of the radical. 
\end{abstract}

\section*{Introduction}

The Auslander-Reiten theory is one of the main tools to study representations over Artinian algebras. By this theory, we define \textit{radical} and \textit{irreducible morphisms}. A morphism $f:X \rightarrow Y$ between two (finitely generated, right) modules over an Artinian algebra $A$
is called \textbf{radical} if for every section (split monomorphism) $j: X' \rightarrow X$ and retraction (split epimorphism) $p: Y \rightarrow Y'$, it holds that $pfj: X' \rightarrow Y'$ is not an isomorphism. We denote by $\rad_A(X,Y)$ the subspace of $\Hom_A(X,Y)$ formed by the radical morphisms. A morphism $f:X \rightarrow Y$ between two modules is called \textbf{irreducible} if:
(i) $f$ is neither a section nor a retraction, and (ii) if $f = hg$, where $g: X \rightarrow Z$ and $h: Z \rightarrow Y$ are morphisms, then either $g$ is a section or $h$ is a retraction.

In case $X$ and $Y$ are both indecomposable, the definition is simpler: a morphism $f: X \rightarrow Y$ is radical if and only if it is not an isomorphism, and it is irreducible if and only if it is radical and does not factor as the composite of two radical morphisms.

Since $\rad_A(X,Y)$ actually forms what is called an \textit{ideal} of the module category, we can consider the \textit{powers} of it, defining recursively: $\rad^1 = \rad$, $\rad^n = \rad^{n-1} \cdot \rad$ for $n>1$. We also denote by $\rad^{\infty}$ the intersection of all the ideals $\rad^n$ for $n \geq 1$. With this notation, if $X$ and $Y$ are indecomposable, a morphism $f: X \rightarrow Y$ is irreducible if and only if it belongs to $\rad_A(X,Y) \setminus \rad^2_A(X,Y)$.

That way, for $n \geq 1$, the composite of $n$ irreducible morphisms between indecomposable modules belongs to the $n$-th power of the radical. However, if $n>1$, it is not true in general that it does not belong to $\rad^{n+1}$, even if we assume it is non-zero. (By the way, Example 1.4 (a) in \cite{CCT1} brings two irreducible morphisms whose composite is non-zero and belongs to $\rad^{\infty}$). So a general problem (which we usually call here as the \textit{problem of composites of irreducible morphisms}) is to decide when the composition of $n$ irreducible morphisms is non-zero and belongs to $\rad^{n+1}$.

A classical result in that sense is the Igusa-Todorov Theorem (\cite{IT1}, also stated as Theorem~\ref{th:igusa todorov} below), which asserts that the composite of $n$ irreducible morphisms along a sectional path can never be in $\rad^{n+1}$. The problem of composites of irreducible morphisms probably begins to be addressed more explicitly in \cite{CCT1}, where the case $n=2$ is solved. We will recall such a result, along with other results about composites of irreducible morphisms obtained in recent years, in Section~\ref{subsec:background}.

While the results mentioned above solve particular cases of the problem of composites of irreducible morphisms, a general criterion that works for arbitrary lengths of compositions along arbitrary paths was introduced by the authors C. Chaio, P. Le Meur and S. Trepode in a series of articles (\cite{CT,CMT1,CMT2,CMT3}). This criterion is included as Theorem~\ref{th:inf ou atalhos} below. One of our main objectives in this survey is to explain the techniques they used, and then unify their results, summarizing some of the proofs involved.

The techniques that Chaio et al. used to prove this general criterion are non-elementary, and involve \textit{coverings of quivers, mesh categories} and \textit{well-behaved functors}, which were introduced in \cite{Rie,BG} over algebraically closed fields. Later, \cite{IT1} defined mesh categories over arbitrary fields. Coverings of quivers are defined in analogy to Algebraic Topology, while mesh categories are similar to the module category, but are defined in a more combinatorial manner and have better properties than the module category. Finally, the well-behaved functors relate the mesh category and the module category, allowing one to obtain results about the latter category using the former.

It is important to remark that we have made the decision here to rename `well-behaved functors', as usually found in literature, as `Riedtmann functors', which we hope will give these functors a more formal, precise name, also giving more credit for their creators.

While \cite{Rie,BG} introduce the concepts above, \cite{IT1,CMT1,CMT2} define them slightly different in nature, and we believe we have a couple of original results (Theorems~\ref{th:universal x generico} and~\ref{th:universal x generico 2}) towards proving that these multiple definitions are convergent in a certain manner. Showing these connections was in fact a key motivation for this survey, and these our main results.

This work is organized as follows: in Section~\ref{sec:prelim} we remind some preliminary concepts needed for the rest of the paper. In particular, \ref{subsec:ar} provides connections between irreducible morphisms and the so-called Auslander-Reiten theory, while in~\ref{subsec:background} we collect some of the background on composites of irreducible morphisms, especially regarding results in particular cases. We will explain the necessary techniques of coverings, mesh categories and Riedtmann functors in Sections~\ref{sec:coverings}, \ref{sec:mesh categories} and~\ref{sec:Riedtmann}, respectively. After that, we proceed for the applications in Section~\ref{sec:appl to composites}: in~\ref{subsec:inf ou atalhos} we use the concepts above to give the general criterion over composites (Theorem~\ref{th:inf ou atalhos}) due to Chaio, Le Meur and Trepode. And finally, in~\ref{subsec:new proof IT}, we also show how to derive a new proof of the Igusa-Todorov Theorem using these techniques.

We will always be using the letter $k$ to denote a field. By an algebra $A$, we mean, unless otherwise stated, a finite dimensional associative and unitary $k$-algebra. By modules we will usually mean finitely generated right $A$-modules. We refer to the books \cite{AC,AC2} for unexplained notions on modules and representation theory.

\section{Preliminary concepts}
\label{sec:prelim}

\subsection{Quivers}
\label{subsec:quiv}

A \textbf{quiver} $Q$ is a 4-uple $Q=(Q_0,Q_1,s,e)$, where $Q_0$ is a set of \textbf{vertices}, $Q_1$ is a set of \textbf{arrows}, and $s,e:Q_1 \rightarrow Q_0$ are two functions which give, respectively, the \textbf{start} and the \textbf{end} of an arrow. To each arrow $\alpha \in Q_1$ we can consider its formal inverse $\alpha^{-1}$, and we establish by convention that $s(\alpha^{-1}) = e(\alpha)$ and $e(\alpha^{-1}) = s(\alpha)$.

A \textbf{walk} over $Q$ is a sequence $\beta_n \cdots \beta_1$, where, for every $1 \leq i \leq n$, $\beta_i$ is either an arrow or the inverse of an arrow, and where $e(\beta_i) = s(\beta_{i+1})$ for every $1 \leq i < n$. If $w= \beta_n \cdots \beta_1$ is a walk, we extend the notations used for start and end vertices: $s(w) \doteq s(\beta_1)$ and $e(w) \doteq e(\beta_n)$. If $w$ and $w'$ are two walks and $e(w) = s(w')$, we can define the composition $w'w$ naturally by juxtaposition.

If $\beta_n \cdots \beta_1$ is a walk and all $\beta_i's$ are arrows (rather than possibly inverses of arrows), then we say that this walk is a \textbf{path of length $n$}. Additionally, one associates to each vertex $x$ of $Q$ a trivial \textbf{path of length 0}, denoted by $\epsilon_x$. Of course, $s(\epsilon_x)=e(\epsilon_x) =x$.

\subsection{Auslander-Reiten theory}
\label{subsec:ar}

We have written above about radical and irreducible morphisms, and now we give further reminders on Auslander-Reiten theory. This theory was introduced in the 1970's by M. Auslander and I. Reiten as a new technique to study the representations of Artin algebras. The key point is the existence of almost split sequences, a concept which we now recall. 

Let $A$ be an Artin algebra. A short exact sequence $(*): 0 \rightarrow L \xrightarrow{f} M \xrightarrow{g} N \rightarrow 0 $ is called an \textbf{almost split sequence} or an \textbf{Auslander-Reiten sequence} provided both $f$ and $g$ are irreducible morphisms. Since irreducible morphisms do not split, such sequences also do not split. Moreover, if $(*)$ is an almost split sequence, then it satisfies: (i) $L$ and $N$ are indecomposable; (ii) $g$ is \textbf{right minimal almost split}, that is, every $h \in \operatorname{End} M$ such that $gh = g$ is an automorphism (this is the \textit{right minimality} of $g$) and for every radical morphism $v: V \rightarrow N$ there exists $v': V \rightarrow M$ such that $v = gv'$; (iii) $f$ is \textbf{left minimal almost split}, that is, it satisfies the dual condition stated for $g$. 

Auslander and Reiten have shown that if $X$ is an indecomposable non-projective $A$-module, then there exists a unique almost split sequence $ 0 \rightarrow \tau X \xrightarrow{f} M \xrightarrow{g} X \rightarrow 0 $ ending at $X$. The indecomposable $A$-module $\tau X$, uniquely determined from $X$ up to isomorphism, is called the \textbf{Auslander-Reiten translation} of $X$. Dually, if $Y$ is an indecomposable non-injective $A$-module, then there exists a unique almost split sequence $ 0 \rightarrow Y \xrightarrow{f} M \xrightarrow{g} \tau^{-1} Y \rightarrow 0$ starting at $Y$. Moreover, any right (or left) minimal almost split morphism ending (starting, respectively) at an indecomposable non-projective (non-injective) module is the end (start) of an almost split sequence. It is worthwhile mentioning that for a given indecomposable projective module $P$, then the canonical inclusion $\rad P \rightarrow P$ is a right minimal almost split morphism, while for an indecomposable injective module $I$, the canonical projection $I \rightarrow I/\operatorname{soc} I$ is a left minimal almost split morphism.

Another connection between irreducible morphisms and almost split morphisms goes as follows. Let $X$ be an indecomposable module. Then a morphism $f: Y \rightarrow X$  (or $g : X \rightarrow Z $) is irreducible if and only if there exists a right (left, respectively) minimal almost split morphism $(f, f'): Y \oplus Y' \rightarrow X$ ($(g, g')^t : X \rightarrow Z \oplus Z' $, respectively). So, irreducible morphisms are described in terms of right and left minimal almost split morphisms. 

An important feature of this theory is the so-called Auslander-Reiten quiver. We shall recall it in the case where $A$ is a finite dimensional algebra over an algebraically closed field $k$. The \textbf{Auslander-Reiten quiver} (or \textbf{AR-quiver} for short) $\Gamma(\md A)$ of $A$ is a quiver defined as follows: (i) the vertices of  $\Gamma(\md A)$ are in bijection with the isomorphism classes of indecomposable $A$-modules; (ii) for two vertices $M, N$, the number of arrows $M\rightarrow N$ equals the dimension of the $k$-vector space $\operatorname{irr}_k(M,N) \doteq \rad_A(M,N)/\rad^2_A(M,N)$ of the irreducible morphisms from $M$ to $N$. The Auslander-Reiten quiver is interesting because it allows us to visualize the module category in the diagrammatic form of a quiver, while also summarizing information about the module category.

If $A$ is an indecomposable representation-finite algebra (i.e., an algebra having only finitely many indecomposable modules), then $\Gamma(\md A)$  is connected, but, in general, this is not true. Also, when equipped with the Auslander-Reiten translation $\tau$, $\Gamma(\md A)$ is a translation quiver, a concept defined below. For further details on Auslander-Reiten theory, we indicate \cite{AC}.

\subsection{Some background on the problem of compositions of irreducible morphisms}
\label{subsec:background}

As we have suggested in the Introduction, probably the first result that deals with composites of irreducible morphisms is due to K. Igusa and G. Todorov. If $X_0 \xrightarrow{h_1} X_1 \xrightarrow{h_2} \cdots \xrightarrow{h_n} X_n$ is a path of irreducible morphisms, where $X_0,\dots,X_n$ are indecomposable modules, then we say that this path is \textbf{sectional} if $\tau X_i \neq X_{i-2}$ for $2 \leq i \leq n$. (Where $\tau$ denotes the Auslander-Reiten translation). The result is the following:

\begin{teo}[Igusa, Todorov, \cite{IT1}]
\label{th:igusa todorov}
    If $X_0 \xrightarrow{h_1} X_1 \xrightarrow{h_2} \cdots \xrightarrow{h_n} X_n$ is a sectional path of irreducible morphisms between indecomposable modules, then $h_n \cdots h_1 \in \rad^n(X_0,X_n) \setminus \rad^{n+1}(X_0,X_n)$.
\end{teo}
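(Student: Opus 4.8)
The plan is to separate the two assertions, since they are of very different character. The membership $h_n \cdots h_1 \in \rad^n(X_0,X_n)$ is immediate: each $h_i$ is irreducible between indecomposables, hence $h_i \in \rad(X_{i-1},X_i)$, and by the very definition of the powers of the radical the composite of $n$ radical morphisms lies in $\rad^n$. All the content is in proving $h_n \cdots h_1 \notin \rad^{n+1}(X_0,X_n)$, and I would do this by induction on $n$. The base case $n=1$ is exactly the characterization recalled above, namely that an irreducible morphism between indecomposables lies in $\rad \setminus \rad^2$.

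For the inductive step I would assume the statement for all sectional paths of length $n-1$ and argue by contradiction, supposing $h_n \cdots h_1 \in \rad^{n+1}(X_0,X_n)$. Since $h_n$ is irreducible and $X_n$ is indecomposable, there is a right minimal almost split morphism ending at $X_n$ having $X_{n-1}$ as a direct summand, say $g=(h_n,h_n') : X_{n-1}\oplus E' \to X_n$. When $X_n$ is non-projective this $g$ is the right-hand map of the almost split sequence $0 \to \tau X_n \xrightarrow{f} X_{n-1}\oplus E' \xrightarrow{g} X_n \to 0$; when $X_n$ is projective one works instead with the inclusion $\rad X_n \to X_n$. Writing $f=(f_1,f_2)^t$, the component $f_1 : \tau X_n \to X_{n-1}$ is itself irreducible, so it is a genuine candidate for an incoming irreducible morphism at $X_{n-1}$.

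The engine of the proof is a lifting lemma. Put $u = h_{n-1}\cdots h_1 : X_0 \to X_{n-1}$, so that $u \in \rad^{n-1}$ and, by the inductive hypothesis applied to the sectional subpath $X_0 \to \cdots \to X_{n-1}$, we have $u \notin \rad^n$. I claim that the hypothesis $h_n u = h_n \cdots h_1 \in \rad^{n+1}$ forces the existence of a morphism $w : X_0 \to \tau X_n$ with $w \in \rad^{n-1}(X_0,\tau X_n)$ and $u \equiv f_1 w \pmod{\rad^n}$. This is produced by lifting the factorization of $h_n u$ through the right almost split morphism $g$ and reading off the radical degrees componentwise, using the right minimality of $g$ to control the ambiguity of the lift. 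Granting the claim, we reach a contradiction at once: since $f_1 \in \rad$ and $w \in \rad^{n-1}$ we have $f_1 w \in \rad^n$, whence $u = (u - f_1 w) + f_1 w \in \rad^n$, contradicting $u \notin \rad^n$. This closes the induction.

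The delicate point, and what I expect to be the main obstacle, is proving the lifting lemma with the sharp degree bound $w \in \rad^{n-1}$: a bare lift only yields $w$ in some lower power of the radical, which would give the weaker and useless conclusion $u \in \rad^{n-1}$. It is precisely here that the sectional hypothesis $\tau X_n \neq X_{n-2}$ must be invoked. The point is that the irreducible morphism $f_1 : \tau X_n \to X_{n-1}$ is then genuinely distinct from the irreducible morphism $h_{n-1} : X_{n-2} \to X_{n-1}$ carrying the given path, so the two incoming irreducible morphisms at $X_{n-1}$ cannot be merged in $\operatorname{irr}(-,X_{n-1})$; this non-coincidence is what upgrades the radical degree of the lifted map $w$ and makes the filtration argument sharp. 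Without sectionality a $\tau$-coincidence allows exactly the cancellation that the lemma rules out, which is consistent with the fact that non-sectional composites genuinely can fall into $\rad^{n+1}$. Carrying out the degree bookkeeping across the mesh at $X_{n-1}$, together with separately handling the projective boundary case via $\rad X_n \to X_n$, is therefore where almost split sequences and the sectional assumption must be used in tandem, and it is the technical core of the argument.
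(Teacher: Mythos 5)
Your reduction steps are fine as far as they go: the membership $h_n\cdots h_1\in\rad^n$, the base case, the factorization of $h_n\cdots h_1\in\rad^{n+1}$ through the sink map $g$, the resulting identity $u\equiv f_1w \pmod{\rad^n}$ for \emph{some} $w\colon X_0\to\tau X_n$, and the projective case (where the sink map $\rad X_n\hookrightarrow X_n$ is a monomorphism, so one gets $u\in\rad^n$ directly, no $w$ needed). The genuine gap is exactly where you place it, but it is not a technical point that sectionality at the last mesh can repair: your ``lifting lemma'' \emph{is} the theorem, not a reduction of it. Since the left almost split map $f=(f_1,f_2)^t$ is a monomorphism, the lift $w$ is uniquely determined by the chosen factorization $v$ of $h_nu$ through $g$; right minimality does not single out $v$, and the bare lift carries no radical-depth information whatsoever. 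Moreover, the assertion that \emph{some} admissible choice yields $w\in\rad^{n-1}$ is, by your own contradiction argument, equivalent to $u\in\rad^n$, i.e.\ to the very inductive step being proved. All the content of the theorem has been moved into an unproven lemma.

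The mechanism you propose for that lemma cannot supply it when $n\geq 3$. Sectionality at the last mesh is the condition $\tau X_n\neq X_{n-2}$: it constrains $X_{n-2}$, whereas $w$ is a morphism $X_0\to\tau X_n$. For $n=2$ the two coincide ($X_{n-2}=X_0$) and your argument genuinely closes: every $w\colon X_0\to\tau X_2$ is then radical, so $f_1w\in\rad^2$, forcing the irreducible $h_1$ into $\rad^2$, absurd. For $n\geq3$, nothing rules out $w\in\rad\setminus\rad^2$ (there may be an honest arrow $X_0\to\tau X_n$, and sectionality does not even forbid $X_0\cong\tau X_n$), so the lift gains no depth. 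The intuition that $u\equiv f_1w\pmod{\rad^n}$ is impossible because $u$ ``ends with the arrow $h_{n-1}$'' while $f_1w$ ``ends with the distinct arrow $f_1$'' tacitly assumes that $\rad^{n-1}(X_0,X_{n-1})/\rad^n(X_0,X_{n-1})$ decomposes as a direct sum indexed by the arrows entering $X_{n-1}$; but that decomposition is precisely the nontrivial structural fact (Theorem~\ref{th:b19}, transported through a Riedtmann functor), not something available in $\md A$ by inspection. To close your induction elementarily one must strengthen the inductive statement so that depth control propagates through \emph{every} mesh along the path, not only the last one — this is how the classical proof (\cite{IT1}, or \cite{AC}, IV.3.5) is organized. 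The paper avoids that bookkeeping entirely: it lifts the path to the universal covering and applies Corollary~\ref{cor:inf ou atalhos}, by which $h_n\cdots h_1\in\rad^{n+1}$ if and only if $\overline{h_n}\cdots\overline{h_1}=0$ in the mesh category, and the latter is impossible for a sectional path because mesh relations are supported on non-sectional paths.
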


Some years later, \cite{CCT1} gave a complete solution for the problem of composites of irreducible morphisms in the particular case where $n=2$, by establishing the following result:

\begin{teo}[\cite{CCT1},2.2]
\label{teo:cct1}

Given $X,Y,Z$ indecomposable modules over an Artin algebra $A$, the following are equivalent:

\begin{enumerate}
    \item There are irreducible morphisms $h:X \rightarrow Y$ e $h':Y \rightarrow Z$ such that $h'h \neq 0$ and $h'h \in \rad^3(X,Z)$.
    
    \item There are an almost split sequence $0 \rightarrow X \xrightarrow{f} Y \xrightarrow{g} Z \rightarrow 0$ and non-isomorphisms $\phi_1: X \rightarrow N$ and $\phi_2: N \rightarrow Z$, with $N$ indecomposable and not isomorphic to $Y$, such that $\phi_2 \phi_1 \neq 0$.
    
    \item There are an almost split sequence $0 \rightarrow X \xrightarrow{f} Y \xrightarrow{g} Z \rightarrow 0$ and a morphism $\phi \in \rad^2(Y,Y)$ such that $g\phi f \neq 0$.
    
    \item There is an almost split sequence $0 \rightarrow X \xrightarrow{f} Y \xrightarrow{g} Z \rightarrow 0$ and $\rad_A^4(X,Z) \neq 0$.
    
\end{enumerate}
\end{teo}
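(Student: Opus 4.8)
The plan is to prove Theorem~\ref{teo:cct1} as a cyclic chain of implications $(1) \Rightarrow (2) \Rightarrow (3) \Rightarrow (4) \Rightarrow (1)$, exploiting the Auslander-Reiten machinery recalled above, in particular the lifting property of right minimal almost split morphisms.

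\bigskip

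For $(1) \Rightarrow (2)$: Given irreducible $h : X \to Y$ and $h' : Y \to Z$ with $h'h \neq 0$ and $h'h \in \rad^3(X,Z)$, I would first use the characterization of irreducible morphisms in terms of minimal almost split morphisms. Since $h$ is irreducible with $Y$ indecomposable, one wants to embed $h$ into an almost split sequence. The key point is that $h'h \in \rad^3$ forces a genuine factorization that cannot be routed through $Y$ itself: writing $h'h$ as a sum of compositions of three radical morphisms and using that $g$ (the minimal right almost split map onto $Z$) lifts radical maps into $Z$, I would extract an indecomposable summand $N \not\cong Y$ appearing as an intermediate object, producing non-isomorphisms $\phi_1 : X \to N$ and $\phi_2 : N \to Z$ with $\phi_2 \phi_1 \neq 0$. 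The point is that if every such intermediate object were $Y$, the composite would sit in $\rad^2 \setminus \rad^3$ by an Igusa--Todorov-type sectional argument, contradicting the hypothesis. I expect the hardest step of the whole theorem to lie here: carefully tracking how the membership in $\rad^3$ produces a factorization through a new indecomposable $N$, since this requires analyzing the radical filtration and the structure of the middle term of the almost split sequence.

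\bigskip

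For $(2) \Rightarrow (3)$: Starting from the almost split sequence $0 \to X \xrightarrow{f} Y \xrightarrow{g} Z \to 0$ and the non-isomorphisms $\phi_1 : X \to N$, $\phi_2 : N \to Z$ with $\phi_2\phi_1 \neq 0$, I would use the defining properties of $f$ and $g$. Since $\phi_1 : X \to N$ is a radical morphism (non-isomorphism between $X$ and $N \not\cong Y$, with $X$ indecomposable) and $f$ is left minimal almost split, $\phi_1$ lifts through $f$, giving $\psi_1 : Y \to N$ with $\phi_1 = \psi_1 f$; dually, since $\phi_2$ is radical and $g$ is right minimal almost split, $\phi_2$ factors as $\phi_2 = g \psi_2$ for some $\psi_2 : N \to Y$. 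Setting $\phi = \psi_2 \psi_1 \in \Hom(Y,Y)$, I would then check $\phi \in \rad^2(Y,Y)$ (it factors through the smaller indecomposable $N$ via two radical maps) and $g \phi f = g \psi_2 \psi_1 f = \phi_2 \phi_1 \neq 0$.

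\bigskip

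For $(3) \Rightarrow (4)$: Given $\phi \in \rad^2(Y,Y)$ with $g\phi f \neq 0$, the composite $g\phi f$ is a non-zero element of $\Hom(X,Z)$. Writing $\phi = \sum_i b_i a_i$ with $a_i, b_i$ radical and noting $f, g$ are irreducible (hence radical), the map $g\phi f$ is a sum of composites of four radical morphisms, so $g\phi f \in \rad^4(X,Z)$, whence $\rad^4(X,Z) \neq 0$. Finally, for $(4) \Rightarrow (1)$: given the almost split sequence and a non-zero element of $\rad^4(X,Z)$, I would decompose it along paths in the radical and use the almost split sequence together with the irreducibility of $f$ and $g$ to exhibit irreducible morphisms $h : X \to Y$, $h' : Y \to Z$ whose composite is non-zero and lies in $\rad^3$; here again the almost split sequence provides the mechanism by which a long radical path descends to a length-two composite of irreducibles. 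The main obstacle across these steps is controlling the radical powers precisely — ensuring composites land in exactly $\rad^3$ or $\rad^4$ and not deeper — which I would handle by repeated appeals to the lifting properties of $f$ and $g$ and to the indecomposability of all modules involved.
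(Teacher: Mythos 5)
Your middle implications are fine, but the survey itself gives no proof of this theorem (it is quoted from \cite{CCT1}), so your proposal must stand as a self-contained argument, and as such it has one fatal gap and one serious omission. The correct parts first: $(2)\Rightarrow(3)$ works exactly as you say — since $N\not\cong Y$ and all modules are indecomposable, $\phi_1$ is not a section and $\phi_2$ is not a retraction, so $\phi_1=\psi_1 f$ and $\phi_2=g\psi_2$ by the almost split properties, and $\psi_1,\psi_2$ are radical precisely because $N\not\cong Y$, whence $\phi=\psi_2\psi_1\in\rad^2(Y,Y)$ and $g\phi f=\phi_2\phi_1\neq 0$; and $(3)\Rightarrow(4)$ is immediate from $g\phi f\in\rad\cdot\rad^2\cdot\rad=\rad^4$.

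The fatal gap is in $(1)\Rightarrow(2)$, and it is not where you locate the difficulty. Conditions (2)--(4) each assert that the almost split sequence ending at $Z$ is $0\to X\to Y\to Z\to 0$, i.e.\ that its middle term is the given indecomposable $Y$; your sketch never establishes this. Two facts are needed: that $Z$ is non-projective with $\tau Z\cong X$ (this one does follow from Theorem~\ref{th:igusa todorov}, since $h'h\in\rad^3$ forces the path $X\to Y\to Z$ to be non-sectional), and — the real core of \cite{CCT1} — that the middle term $E$, of which an irreducible $h\colon X\to Y$ a priori only exhibits $Y$ as a direct summand, is indecomposable. Ruling out $E=Y\oplus E'$ with $E'\neq 0$ requires a genuine argument: writing $h=\alpha f+\beta u$ and $h'=g\gamma+v\delta$ through the almost split morphisms of $0\to X\xrightarrow{(f,u)^t}Y\oplus E'\xrightarrow{(g,v)}Z\to 0$, one gets $h'h\equiv g(\gamma\alpha)f \pmod{\rad^3}$ with $\gamma\alpha$ an automorphism (as $\operatorname{End}(Y)$ is local), and one must then show $g(\gamma\alpha)f\notin\rad^3$ when $E'\neq 0$, which rests on the nondegeneracy of the pairing attached to the mesh ending at $Z$. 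Worse, the mechanism you propose for extracting $N$ is based on a false principle: once $\tau Z\cong X$, \emph{no} path $X\to M\to Z$ is sectional, so there is no ``Igusa--Todorov-type'' obstruction to factorizations routed through $Y$ lying deep in the radical — condition (3) itself is such a factorization, $g\phi f\in\rad^4$ with every intermediate object equal to $Y$. Finally, $(4)\Rightarrow(1)$ is only gestured at; the actual argument is short but specific: write $0\neq\theta\in\rad^4(X,Z)$ as $\sum_i\beta_i\alpha_i$ with $\alpha_i\in\rad^3$, factor each radical $\beta_i$ through the right almost split $g$ to obtain $\theta=g\psi$ with $\psi\in\rad^3(X,Y)$, then perturb: $h:=f+\psi$ is irreducible because $\psi\in\rad^2$, and with $h':=g$ one gets $h'h=gf+g\psi=\theta\neq 0$, which lies in $\rad^4\subseteq\rad^3$. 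Without this radical-power-controlled lifting plus perturbation trick, your sketch does not actually produce irreducible morphisms whose composite is nonzero and in $\rad^3$.
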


After this, many results about compositions of irreducible morphisms in particular cases have been obtained, and we highlight some of them:

\begin{itemize}
    \item In \cite{CCT3}, the authors solve the case $n=3$: similar to above, they give necessary and sufficient conditions for the existence of a non-zero composite of 3 irreducible morphisms between indecomposable modules belonging to $\rad^4$.

    \item In \cite{ChS}, results about the composition of 4 irreducible morphisms in $\rad^5$ are given.

    \item In \cite{CCT2}, the authors study compositions along what they define as \textit{almost sectional paths}, which are paths which may only fail to be sectional on the first and the last module, but not both.

    \item In \cite{AlC}, it is proved that if the composition of two irreducible morphisms between indecomposable modules belongs to $\rad^3$, then it must also belong to $\rad^5$.

    \item In \cite{C}, Corollary 2.19, it is proved that if the composite of 3 irreducible morphisms belongs to $\rad^4$, then it must belong to $\rad^6$.

    \item \cite{CPT} studies the problem of composites of irreducible morphisms over regular components, that is, componentes of the AR-quiver not containing projective nor injective modules. They prove, for example, that if the composite of $n$ irreducible morphisms between modules in a stable tube or in a component of type $\mathbb{ZA}_{\infty}$ lies in $\rad^{n+1}$, then actually it lies in $\rad^{\infty}$  (Corollary 2.10).

    \item Some articles (\cite{CCT1,CCT4,CGS}) also showed that this problem is particularly interesting to study over string algebras (see \cite{BR} for a definition): for example, they gave criteria for the existence of certain compositions of irreducible morphisms in terms of the possible configurations of the Gabriel quiver and the relations over it.

    \item Specifically, \cite{CGS} shows that for every $n \geq 3$ and every $m \geq 4$, there is a string algebra for which there is a path of $n$ irreducible morphisms $f_1, \dots, f_n$ between indecomposable modules whose composition $f_n \cdots f_1$ belongs to $\rad^{n+m} \setminus \rad^{n+m+1}$, and such that the compositions $f_n \cdots f_2$ and $f_{n-1} \cdots f_1$ do not belong to $\rad^n$.

    \item In a paper to appear (\cite{CC5}), we expand the result from last item, by obtaining that for every $n \geq 2$ and every $m \geq 3$, there is a string algebra for which there is a path of $n$ irreducible morphisms $f_1, \dots, f_n$ between indecomposable modules whose composition $f_n \cdots f_1$ belongs to $\rad^{n+m} \setminus \rad^{n+m+1}$.
\end{itemize}

Parallel to the results above, which deal with the category of modules, this problem has been extended for categories of complexes. For example, \cite{CST} gives results over the bounded derived category.

\subsection{Translation quivers}
\label{subsec:tr quivs}

The Auslander-Reiten quivers, as well as their connected components, are a particular case of what is called \textbf{translation quivers}. This concept has been introduced as a generalization, since many results about Auslander-Reiten quivers rely only on combinatorial aspects of these quivers and on the Auslander-Reiten translation $\tau$ viewed as simply a function. We proceed for the definition:

\begin{defi}
A \textbf{translation quiver} is a quiver $\Gamma$ such that:

\begin{enumerate}
    \item $\Gamma$ has no loops, that is, no arrows starting and ending at the same vertex;
    
    \item $\Gamma$ is equipped with a set of vertices called {\it projective vertices} and another set of vertices called {\it injective vertices};
    
    \item There is a bijection (called \textit{translation}) $\tau: x \mapsto \tau(x)$ between non-projective and non-injective vertices; and 
    
    \item  For each pair of vertices $x$ and $y$ of $\Gamma$, with $x$ non-projective, there is a bijection $\sigma: \alpha \mapsto \sigma(\alpha)$ between arrows of the form $y \rightarrow x$ and arrows of the form $\tau(x) \rightarrow y$.
\end{enumerate}
\end{defi}

Every component of an AR-quiver is an example of a translation quiver, with the Auslander-Reiten translation being the function $\tau$ above.

In general, we will always use the letters $\tau$ e $\sigma$ to denote the functions associated to a given translation quiver $\Gamma$. In addition to the conditions above, we shall be always assuming that every translation quiver is locally finite. That means that for every vertex $x$ of $\Gamma$, there is only a finite number of arrows starting at $x$ and a finite number ending at it. Note that the components of an AR-quiver are always locally finite as translation quivers.

We still have some additional definitions regarding translation quivers. Let $\Gamma$ be a translation quiver. If $x_0 \xrightarrow{\alpha_1} x_1 \xrightarrow{\alpha_2} \cdots \xrightarrow{\alpha_n} x_n$ is a path over $\Gamma$, we call it a \textbf{sectional path} if $x_i \neq \tau x_{i+2}$ for every $i$ between $0$ and $n-2$. We say that the translation quiver $\Gamma$ has \textbf{trivial valuation} if for every pair of vertices $x, y$ of $\Gamma$ there is at most one arrow $x \rightarrow y$ in $\Gamma$.

Also, given a non-projective vertex $x$ of $\Gamma$, the \textbf{mesh} ending at $x$ is the full subquiver of $\Gamma$ determined by all the arrows that end at $x$ and the arrows that start at $\tau x$.

    \begin{displaymath}
        \xymatrix{ && x_1 \ar[ddrr]^{\alpha_1}&& \\
        && x_2 \ar[drr]_{\alpha_2}&&\\
        \tau x \ar[uurr]^{\sigma(\alpha_1)} \ar[urr]_{\sigma(\alpha_2)} \ar[drr]_{\sigma(\alpha_r)}&& \vdots && x \\
        &&x_r \ar[urr]_{\alpha_r} &&}
    \end{displaymath}

\section{Coverings, homotopy and the universal covering}
\label{sec:coverings}

We now recall the concept of coverings, introduced in the Representation Theory of Algebras in the 1970's, initially for the study of representation-finite algebras. 

\begin{defi}[\cite{Rie,BG}]
Let $\Delta$ and $\Gamma$ be two translation quivers. We say that a quiver morphism $\pi: \Delta \rightarrow \Gamma$ is a \textbf{covering} if:

\begin{enumerate}
    \item A vertex $x$ of $\Delta$ is projective (or injective, respectively) if and only if $\pi(x)$ is also projective (or injective, respectively).
    
    \item $\pi$ commutes with the translation functions in $\Delta$ and $\Gamma$ (wherever these are defined). 
    
    \item For every vertex $x$ of $\Delta$, the map $\alpha \mapsto \pi(\alpha)$ induces a bijection from the set of arrows of $\Delta$ starting at $x$ (or ending at $x$, respectively) and the set of arrows of $\Gamma$ starting at $\pi(x)$ (or ending at $\pi(x)$, respectively). 
\end{enumerate}
\end{defi}

As one can see from the definition above, a covering of a translation quiver $\Gamma$ is another translation quiver $\Delta$ which locally resembles $\Gamma$, much like the definition of covering spaces in Algebraic Topology.

Let $\Gamma$ be a translation quiver. Also in analogy to what is done in Algebraic Topology, in \cite{BG} the authors Bongartz and Gabriel introduced the concept of \textbf{homotopy} between walks over $\Gamma$. Next we explain this idea:

Consider the smallest equivalence relation $\sim$ between walks over $\Gamma$ which satisfies conditions a),b) and c) below:

\begin{enumerate}
    \item [a)] If $\alpha: x \rightarrow y$ is an arrow of $\Gamma$, then $\alpha \alpha^{-1} \sim \epsilon_y$ and $\alpha^{-1} \alpha \sim \epsilon_x$. (Recall that $\epsilon_x$ and $\epsilon_y$ denote the paths of length zero over the vertices $x$ and $y$ respectively).
    
    \item [b)] If $x$ is a non-projective vertex and $\alpha: z \rightarrow x$, $\alpha': z' \rightarrow x$ are two arrows ending at $x$, then $\alpha (\sigma \alpha) \sim \alpha' (\sigma \alpha')$.
    
    \item [c)] If $\gamma_1,\gamma,\gamma',\gamma_2$ are walks over $\Gamma$ such that $\gamma \sim \gamma'$ and the compositions $\gamma_1 \gamma \gamma_2$ e $\gamma_1 \gamma' \gamma_2$ are defined, then $\gamma_1 \gamma \gamma_2 \sim \gamma_1 \gamma' \gamma_2$.
\end{enumerate}

The relation $\sim$ is then called a \textbf{homotopy} between walks.

Also as in Algebraic Topology, in \cite{BG} the \textbf{universal covering} $\tilde{\Gamma}$ of a translation quiver $\Gamma$ is defined. This covering is defined as follows:

Fix a vertex $x_0$ of $\Gamma$, which we suppose to be connected. (The quiver $\tilde{\Gamma}$ will not depend on the choice of $x_0$, up to isomorphism).

\begin{itemize}
    
    \item The vertices of $\tilde{\Gamma}$ are the classes of homotopy $\overline{w}$ of walks $w$ over $\Gamma$ which start at the given vertex $x_0$ and end in some other (non-fixed) vertex, which will be $e(w)$. (Note that $e(w)$ does not depend on the choice of the representative $w$ of the class of homotopy $\overline{w}$).

    \item The arrows of $\tilde{\Gamma}$ are pairs $(\overline{w},\alpha)$, where $\overline{w}$ is a class of homotopy and $\alpha:e(w) \rightarrow z$ is an arrow in $\Gamma$. The arrow $(\overline{w},\alpha)$ starts at $\overline{w}$ and ends at $\overline{\alpha w}$.

    \item If $w$ is a walk and $e(w)$ is projective, then $\overline{w}$ is projective. If $e(w)$ is not projective, there are arrows $\tau(e(w)) \xrightarrow{\sigma \alpha} z \xrightarrow{\alpha} e(w)$ and so we define $\tau(\overline{w}) = \overline{(\sigma \alpha)^{-1}\alpha^{-1} w}$. (This also does not depend on the choice of $\alpha$).

    \item We define the covering of quivers $\pi: \tilde{\Gamma} \rightarrow \Gamma$, given by $\pi(\overline{w}) = e(w)$.
\end{itemize}

With that, the pair $(\tilde{\Gamma},\pi)$ (or simply $\tilde{\Gamma}$) is the \textbf{universal covering} of $\Gamma$.

In \cite{CMT1} another kind of covering is introduced, the so-called \textbf{generic covering}, which can be more interesting to study than the universal one in the case where $\Gamma$ has non-trivial valuation. In \cite{CMT1} the generic covering is also denoted by $\tilde{\Gamma}$ but here we shall denote it by $\hat{\Gamma}$ to differentiate from the universal covering.

The generic covering $\hat{\Gamma}$ is defined as follows: consider the smallest equivalence relation $\sim'$ between the walks over $\Gamma$ the satisfies conditions a), b) and c) above (with $\sim'$ instead of $\sim$) and also condition d) below:

\begin{enumerate}
    \item [d)] If $\alpha,\beta: x \rightarrow y$ are two arrows with the same start and end vertex, then $\alpha \sim' \beta$.
\end{enumerate}

The relation $\sim'$ has also been called a \textbf{homotopy} between walks. Then the quiver $\hat{\Gamma}$ is defined in the same way as $\tilde{\Gamma}$, but switching $\sim$ by $\sim'$.

\section{Mesh categories}
\label{sec:mesh categories}

\subsection{The general case}
\label{subsec:mesh general}

Let $k$ be field (which initially could be any field, as in \cite{IT1}) and let $\Gamma$ be a translation quiver. In order to define the mesh category over $\Gamma$, first we need an auxiliary concept, that of \textbf{$k$-modulation}:

\begin{defi}[\cite{IT1}]
    A \textbf{$k$-modulation} over a translation quiver $\Gamma$ consists of the following structure:

    \begin{itemize}
        \item  For every vertex $x$ of $\Gamma$, a finite-dimensional division $k$-algebra $k_x$.

        \item For every arrow $x \rightarrow y$ of $\Gamma$, a $(k_x - k_y)$-bimodule of finite $k$-dimension $M(x,y)$. 

        \item For every non-projective vertex $x$ of $\Gamma$, an isomorphism of $k$-algebras $\tau_*: k_x \xrightarrow{\sim} k_{\tau x}$.

        \item For every non-projective vertex $x$ of $\Gamma$ and every vertex $y$ such that there is an arrow $y \rightarrow x$, a non-degenerate $k_y$-bilinear form $\sigma_*: M(y,x) \otimes_{k_x} M(\tau x,y) \rightarrow k_y$ (where $k_x$ acts on the left of $M(\tau x,y)$ using the isomorphism $\tau_*: k_x \xrightarrow{\sim} k_{\tau x}$).
    \end{itemize}

With that structure, $\Gamma$ is called a \textbf{modulated translation quiver}.
\end{defi}

\begin{obs}
    If $\pi: \Delta \rightarrow \Gamma$ is a covering of translation quivers and $\Gamma$ is equipped with a $k$-modulation, is easy to see how to define a $k$-modulation in $\Delta$ using $\pi$: for instance, to every vertex $x \in \Delta_0$ we associate $k_{\pi x}$ and for every arrow $x \rightarrow y$ we associate $M(\pi x, \pi y)$. With this we say that $\Delta$ has an \textbf{$k$-modulation induced} by $\Gamma$ (and $\pi$).
\end{obs}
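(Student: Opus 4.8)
The plan is to define each of the four pieces of a $k$-modulation on $\Delta$ by pulling the corresponding datum on $\Gamma$ back along $\pi$, and then to check that the defining axioms of a covering guarantee that this pullback is well defined and retains all the required properties. Concretely, I would set $k_{\tilde{x}} = k_{\pi \tilde{x}}$ for every vertex $\tilde{x}$ of $\Delta$, and to every arrow $\tilde{\alpha}: \tilde{x} \to \tilde{y}$ of $\Delta$ I would attach the bimodule $M(\pi\tilde{\alpha})$ of the image arrow $\pi\tilde{\alpha}: \pi\tilde{x} \to \pi\tilde{y}$. Since $k_{\tilde{x}} = k_{\pi\tilde{x}}$ and $k_{\tilde{y}} = k_{\pi\tilde{y}}$, this $M(\pi\tilde{\alpha})$ is automatically a $(k_{\tilde{x}}-k_{\tilde{y}})$-bimodule of the same finite $k$-dimension, so the first two items of the definition require no extra work.

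For the translation isomorphisms, I would first invoke axiom (1) of a covering, which ensures that $\tilde{x}$ is non-projective exactly when $\pi\tilde{x}$ is, so that the structure attached to non-projective vertices transports consistently. Then axiom (2), namely $\pi \tau = \tau \pi$, gives $k_{\tau\tilde{x}} = k_{\pi(\tau\tilde{x})} = k_{\tau(\pi\tilde{x})}$; hence the isomorphism $\tau_*: k_{\pi\tilde{x}} \xrightarrow{\sim} k_{\tau(\pi\tilde{x})}$ already present on $\Gamma$ can be read directly as the desired $\tau_*: k_{\tilde{x}} \xrightarrow{\sim} k_{\tau\tilde{x}}$ on $\Delta$.

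The last item, the bilinear forms, is the delicate one. Given a non-projective vertex $\tilde{x}$ and an arrow $\tilde{\alpha}: \tilde{y} \to \tilde{x}$, I need a non-degenerate form $M(\tilde{\alpha}) \otimes_{k_{\tilde{x}}} M(\sigma\tilde{\alpha}) \to k_{\tilde{y}}$, where $\sigma\tilde{\alpha}: \tau\tilde{x} \to \tilde{y}$. Unwinding the definitions, the two tensor factors are $M(\pi\tilde{\alpha})$ and $M(\pi(\sigma\tilde{\alpha}))$, so in order to pull back the form of $\Gamma$ attached to $\pi\tilde{\alpha}$ I must know that $\pi(\sigma\tilde{\alpha})$ coincides with $\sigma(\pi\tilde{\alpha})$. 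The crux of the proof is therefore to verify that $\pi$ commutes with the mesh bijection $\sigma$: this should follow from axiom (3), which makes $\alpha \mapsto \pi(\alpha)$ a bijection between arrows ending at $\tilde{x}$ and arrows ending at $\pi\tilde{x}$, and likewise for arrows starting at $\tau\tilde{x}$, combined with axiom (2) identifying $\pi(\tau\tilde{x})$ with $\tau(\pi\tilde{x})$ --- that is, one checks that $\pi$ is a morphism of translation quivers in the Bongartz--Gabriel sense. Once this compatibility is in hand, I can define $\sigma_*$ on $\Delta$ to be literally the form $\sigma_*$ on $\Gamma$ for the arrow $\pi\tilde{\alpha}$; since the underlying $k$-spaces and the map are unchanged, non-degeneracy and $k_{\tilde{y}}$-bilinearity are inherited for free, and its target $k_{\pi\tilde{y}} = k_{\tilde{y}}$ is the correct one.

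I expect the commutativity of $\pi$ with $\sigma$ to be the only genuine obstacle; everything else reduces to transporting data across the identifications $k_{\tilde{x}} = k_{\pi\tilde{x}}$ and $M(\tilde{\alpha}) = M(\pi\tilde{\alpha})$. If the notion of covering used here is not understood to include compatibility with $\sigma$ from the outset, I would insert a short lemma deriving this compatibility from axioms (1)--(3), since without it the two tensor factors of the form would fail to match and the pullback of $\sigma_*$ would not be defined.
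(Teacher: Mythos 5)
Your construction is the same as the paper's: the Remark gives no proof beyond exactly the pullback you describe ($k_{\tilde x}=k_{\pi\tilde x}$, bimodules and forms transported along $\pi$), and your treatment of the vertices, the bimodules and $\tau_*$ via covering axioms (1) and (2) is precisely what is intended.

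However, the step you single out as the crux --- that $\pi$ must commute with the mesh bijection $\sigma$ --- is an artifact of your decision to attach a bimodule to each individual arrow. In the paper's definition of a $k$-modulation, both the bimodule $M(x,y)$ and the form $\sigma_*$ are indexed by ordered \emph{pairs of vertices}, not by arrows (the notation $\sigma_*$ is merely suggestive). Consequently, for a non-projective vertex $\tilde x$ of $\Delta$ and an arrow $\tilde y\rightarrow\tilde x$, the induced form is defined outright as the form of $\Gamma$ attached to the pair $(\pi\tilde x,\pi\tilde y)$: the source spaces match because $M^{\Delta}(\tilde y,\tilde x)=M^{\Gamma}(\pi\tilde y,\pi\tilde x)$ and $M^{\Delta}(\tau\tilde x,\tilde y)=M^{\Gamma}(\pi\tau\tilde x,\pi\tilde y)=M^{\Gamma}(\tau\pi\tilde x,\pi\tilde y)$ by axiom (2), and the twisted $k_{\tilde x}$-action agrees since $\tau_*^{\Delta}$ is literally $\tau_*^{\Gamma}$. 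Only axioms (1), (2) and the fact that $\pi$ is a quiver morphism are used; $\sigma$ never enters. This matters because your fallback plan --- proving $\pi\sigma=\sigma\pi$ as a lemma from axioms (1)--(3) --- would fail: those axioms do not constrain how $\pi$ interacts with a chosen polarization in the presence of parallel arrows. For instance, take $\Delta=\Gamma$ with $\pi$ the identity on vertices but permuting two parallel arrows $\beta_1,\beta_2\colon y\rightarrow x$ while fixing the arrows $\tau x\rightarrow y$; this satisfies (1)--(3) yet $\pi(\sigma\beta_1)\neq\sigma(\pi\beta_1)$. So the correct resolution is not to prove your lemma but to keep the modulation indexed by vertex pairs, as the paper does, whereupon the question never arises.
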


Consider now the case where $\Gamma$ is a component of the AR-quiver of an algebra $A$. In this case, it admits a $k$-modulation as described below, which is called \textbf{standard $k$-modulation} over $\Gamma$:

\begin{itemize}
    \item If $X$ is an indecomposable module from $\Gamma$, we make $k_X = \operatorname{End_A} X/\rad_A(X,X)$.
    
    \item If $X \rightarrow Y$ is an arrow from $\Gamma$, we make $M(X,Y) = \rad_A(X,Y)/\rad^2_A(X,Y) \doteq \operatorname{irr}(X,Y)$.

    \item If $X$ is indecomposable non-projective in $\Gamma$, we define the function $\tau_*: \operatorname{End_A} X/\rad_A(X,X) \rightarrow \operatorname{End_A} \tau X/\rad_A(\tau X,\tau X)$ the following way: fix $0 \rightarrow \tau X \rightarrow E \rightarrow X \rightarrow 0$ an almost split sequence ending at $X$. If $u: X \rightarrow X$ is an endomorphism of $X$, we define $\tau_*(\overline{u})$ as being the class $\overline{v}$, where $v: \tau X \rightarrow \tau X$ is a morphism that makes the following diagram commute:

    \begin{displaymath}
        \xymatrix{0 \ar[r] & \tau X \ar[d]_v \ar[r]& E \ar@{=}[d] \ar[r]& X \ar[d]^u \ar[r]& 0 \\ 
        0 \ar[r]& \tau X \ar[r]& E \ar[r]& X \ar[r]& 0}
    \end{displaymath}

    \item If $Y \rightarrow X$ is an arrow of $\Gamma$ and $X$ is not projective, fix $0 \rightarrow \tau X \rightarrow E \rightarrow X \rightarrow 0$ an almost split sequence ending at $X$. If $\overline{u} \in \rad(Y,X) / \rad^2(Y,X)$ e $\overline{v} \in \rad(\tau X,Y) / \rad^2(\tau X,Y)$ are classes of morphisms, we want to define $\sigma_*(\overline{u} \otimes \overline{v}) \in k_Y$. We take $\sigma_*(\overline{u} \otimes \overline{v}) = \overline{v'u'}$, where $u'$ and $v'$ are morphisms that make the following diagram commute:

    \begin{displaymath}
        \xymatrix{&&& Y \ar[dl]_{u'} \ar[d]^u&\\
        0 \ar[r]& \tau X \ar[d]_v \ar[r]& E \ar[dl]^{v'} \ar[r]& X \ar[r]& 0\\
        & Y &&&}
    \end{displaymath}
\end{itemize}

Of course one needs to prove that these are well-defined (for example, that it does not depend on the choice of almost split sequences). We recommend \cite{IT1} for more details. 

Now fix $\Gamma$ a modulated translation quiver. Let $k\Gamma$ be the category whose objects are the vertices of $\Gamma$, and for $x,y \in \Gamma$, the morphisms $x \rightarrow y$ are given by $k\Ga(x,y) = \bigoplus_{i \in \mathbb{N}} (k\Ga)_i(x,y)$, where $(k\Ga)_i(x,y)$ can be defined recursively:

\begin{itemize}
    \item $(k\Ga)_0(x,y) = \begin{cases}
        k_x \text{, if } x=y \\
        0 \text{, if } x \neq y
    \end{cases}$

    \item for $i > 0$, $(k\Ga)_i(x,y) = \bigoplus_{z \in y^-} (k\Ga)_{i-1}(x,z) \otimes M(z,y)$ (where $y^-$ denotes the set of immediate predecessors of $y$).
\end{itemize}

The category $k\Gamma$ is then called the \textbf{path category} over $\Gamma$. As observed in \cite{CMT2}, the path category is a tensor category, as follows: let $S$ be the semisimple category whose objects are the vertices of $\Gamma$, and for every $x,y \in \Gamma_0$, the morphisms $S(x,y)$ are defined by: $S(x,y) = k_x$ if $x = y$ and $S(x,y) = 0$ if $x \neq y$. Then the map $(x,y) \mapsto M(x,y)$ defines a $(S-S)$-bimodule $M$ over the category $S$. The tensor category $T_S(M)$ then coincides with the path category $k\Gamma$ over $\Gamma$. 

We now proceed to the definition of mesh category, which will be a quotient of the path category. Let us tell what are the relations over which we wish to take the quotient. Let $x$ be a non-projective vertex of $\Gamma$. If $y$ is a vertex such that there is an arrow $y \rightarrow x$ in $\Gamma$, fix $\{u_1,\cdots,u_d\}$ a $k_y$-basis of $M(y,x)$. Since the form $\sigma_*$ associated to $y$ and $x$ is non-degenerate, it induces a non-isomorphism $M(\tau x,y) \cong \Hom_{k_y}(M(y,x),k_y)$, and so we have a corresponding dual basis $\{u_1^*,\cdots,u_d^*\}$ of $M(\tau x,y)$ (i.e., $\sigma_*(u_i^* \otimes u_j) = \delta_{ij}$, where $\delta_{ij}$ is Kronecker's delta). Then we define $\gamma_x = \sum_{y \in x^-} \sum_{i=1}^d u_i^* \otimes u_i \in k\Gamma(\tau x, x)$, which is the \textbf{mesh relation} ending at $x$. (Again see \cite{IT1} for the proof that $\gamma_x$ is independent of basis choice).

Let then $I$ be the ideal of the category $k\Gamma$ generated by the morphisms $\gamma_x: \tau x \rightarrow x$, where $x$ runs through the non-projective vertices of $\Gamma$. The \textbf{mesh category} over $\Gamma$ is then defined (\cite{IT1,CMT2}) as being the quotient category $k(\Gamma) \doteq k\Gamma/I$.

\begin{obs}
\label{obs:categoria graduada}
    The path category $k\Ga$ is \textbf{$\mathbb{N}$-graded}: that is, for every pair of vertices $x,y \in \Gamma_0$, we have $k\Ga(x,y) = \bigoplus_{i \in \mathbb{N}} (k\Ga)_i(x,y)$, and if $x,y,z \in \Gamma_0$, $(k\Ga)_i(y,z) \cdot (k\Ga)_j(x,y) \subseteq (k\Ga)_{i+j}(x,z)$ for every $i,j \in \mathbb{N}$. We say that the elements from a space $(k\Ga)_i(x,y)$ are \textbf{homogeneous elements (or morphisms) of degree} $i$.

    In that sense, mesh relations are homogeneous elements of degree 2, and so the ideal $I$ generated by them is a \textbf{homogeneous ideal}. Thus the mesh category $k(\Ga)$ is also $\mathbb{N}$-graded, for being the quotient of the $\mathbb{N}$-graded $k\Ga$ by a homogeneous ideal: for every pair of objects $x,y \in \Gamma_0$, we have a decomposition $k(\Ga)(x,y) = \bigoplus_{i \in \mathbb{N}} k(\Ga)_i (x,y)$, where each $k(\Ga)_i (x,y)$ is the space of equivalence classes of $(k\Ga)_i (x,y)$.
\end{obs}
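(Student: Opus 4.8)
The plan is to establish the grading of the path category first and then deduce everything else formally. Since the excerpt already identifies $k\Gamma$ with the tensor category $T_S(M)$, I would read off the $\mathbb{N}$-grading directly from the tensor-algebra structure: the degree-$i$ component $(k\Ga)_i(x,y)$ is the $i$-fold tensor power, spanned by path tensors $m_i \otimes \cdots \otimes m_1$ with $m_j \in M(z_{j-1},z_j)$ running along paths $x = z_0 \to z_1 \to \cdots \to z_i = y$ of length $i$. Composition in $T_S(M)$ is concatenation of tensors, so composing such a path tensor with one of length $j$ yields a path tensor of length $i+j$; this is exactly the required inclusion $(k\Ga)_i(y,z)\cdot(k\Ga)_j(x,y)\subseteq(k\Ga)_{i+j}(x,z)$. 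Alternatively I would prove the same inclusion by induction on $i$ using the recursive formula $(k\Ga)_i(x,y)=\bigoplus_{z\in y^-}(k\Ga)_{i-1}(x,z)\otimes M(z,y)$, peeling off the final factor. I expect this multiplicativity of composition to be the genuine content of the remark; everything afterward is formal.

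Next I would check that the mesh relations are homogeneous of degree $2$, which is immediate from their definition: each summand $u_i^*\otimes u_i$ of $\gamma_x=\sum_{y\in x^-}\sum_i u_i^*\otimes u_i$ is a tensor of a single element $u_i^*\in M(\tau x,y)$ with a single element $u_i\in M(y,x)$, hence a path tensor of length $2$ along $\tau x \to y \to x$. Thus $\gamma_x\in(k\Ga)_2(\tau x,x)$.

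It then remains to show that the two-sided ideal $I$ generated by homogeneous elements is graded, the standard fact for graded algebras transported to the setting of $k$-categories. Concretely, I would take an arbitrary element of $I$, write it as a finite sum $\sum_k g_k\,\gamma_{x_k}\,f_k$, decompose each $f_k$ and $g_k$ into homogeneous components, and use multiplicativity of composition together with $\deg\gamma_{x_k}=2$ to rewrite the element as a sum of homogeneous morphisms $(g_k)_p\,\gamma_{x_k}\,(f_k)_q$ of degree $p+q+2$, each still lying in $I$. Since the Hom-spaces of $k\Ga$ are already direct sums over degree, this gives $I(x,y)=\bigoplus_i\bigl(I(x,y)\cap(k\Ga)_i(x,y)\bigr)$, i.e.\ $I$ is a graded (homogeneous) ideal.

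Finally, the quotient of a graded category by a graded ideal is graded in the obvious way. Writing $I_i(x,y)=I(x,y)\cap(k\Ga)_i(x,y)$, one has $k(\Ga)(x,y)=(k\Ga)(x,y)/I(x,y)\cong\bigoplus_i(k\Ga)_i(x,y)/I_i(x,y)$, and setting $k(\Ga)_i(x,y)$ to be this $i$-th summand identifies it with the image of $(k\Ga)_i(x,y)$ under the quotient projection, that is, the space of equivalence classes of degree-$i$ morphisms; the directness of the sum is precisely what the homogeneity of $I$ guarantees. Composition descends to the quotient and stays degree-additive, so $k(\Ga)$ is $\mathbb{N}$-graded. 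The only real obstacle is the very first step---pinning down that composition in the path category adds degrees---and even that is essentially built into the tensor-algebra description already recorded in the text.
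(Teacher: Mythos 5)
Your proposal is correct and follows essentially the same route the paper's remark sketches: the grading of $k\Gamma$ read off from its identification with the tensor category $T_S(M)$, the observation that each mesh relation $\gamma_x$ is homogeneous of degree $2$, the standard fact that an ideal generated by homogeneous morphisms is itself homogeneous, and the induced grading on the quotient $k(\Gamma)$. The paper treats all of this as an observation requiring no written proof, and your write-up simply supplies those routine details in the expected way.
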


Keeping the notation from Remark~\ref{obs:categoria graduada}, we define in the mesh category $k(\Ga)$ the ideal
$\Rr k(\Ga)$ given by $\Rr k(\Ga)(x,y) \doteq \bigoplus_{i \geq 1} k(\Ga)_i (x,y)$, which coincide with the ideal generated by the spaces $M(x,y)$ of morphisms of degree 1. The ideal $\Rr k(\Ga)$ is used to be called in literature as the \textbf{radical of the mesh category} by analogy with the Jacobson radical, even though that carries some abuse of language since it is not technically a radical in category-theoretic sense.

Since $\Rr k(\Ga)$ is an ideal, we can consider its powers: for every of vertices $x,y \in \Gamma_0$ and every $n \geq 1$, we define $\Rr^n k(\Ga) = \bigoplus_{i \geq n} k(\Ga)_i(x,y)$.

\subsection{The particular case where $k$ is algebraically closed}
\label{subsec:mesh ac}

Originally (in \cite{Rie,BG}) mesh categories used to be defined only in the case where the base field $k$ is algebraically closed, and the definition was different from the one we gave above. Let us give more details.

Let $k$ be an algebraically closed field and let $\Gamma$ be a translation quiver. Differently from the general case where $k$ may be any field, we do not need $k$-modulations to define the path category $k\Ga$ and the mesh category $k(\Ga)$, since $k$-modulations are trivialized when $k$ is algebraically closed. Namely, if $x$ is a vertex of $\Gamma$ to which we associate a finite-dimensional division $k$-algebra $k_x$ as in the definition of modulation, then $k_x$ coincides with $k$, for being an algebraic (thus finite) extension of $k$ algebraically closed. If $x \rightarrow y$ is an arrow of $\Gamma$, a $(k_x - k_y)$-bimodule $M(x,y)$ is the same as a $k$-vector space $M(x,y)$.

The definition of these categories amounts to something simpler in this case. Let us follow the approach in \cite{Rie,BG,CMT1}.

The \textbf{path category} of $\Gamma$ is defined as the category $k\Gamma$ whose objects are the vertices $\Gamma$ and the morphisms between two vertices $x$ and $y$ are the elements of the vector space given by formal linear combinations of paths over $\Gamma$ which go from $x$ to $y$. 

The \textbf{mesh category} over $\Gamma$, denoted by $k(\Gamma)$, is the quotient category of the path category $k\Gamma$ by the ideal generated by all meshes in $\Gamma$, that is, generated by all morphisms of the form $\sum \alpha (\sigma \alpha): \tau x \rightarrow x$, where $x$ is a non-projective vertex of $\Gamma$ and the summation runs through all arrows $\alpha$ that end in $x$.

Here some ideas we had in the general case still apply:

\begin{itemize}
    \item The path category $k\Ga$ is \textbf{$\mathbb{N}$-graded}, the mesh relations are homogeneous elements of degree 2, the ideal generated by the mesh relations in homogeneous and so the mesh category $k(\Ga)$ is $\mathbb{N}$-graded.

    \item We can define the \textbf{radical of the mesh category} $k(\Ga)$ as being the ideal $\Rr k(\Ga)$ generated by the morphisms of degree 1, which are the equivalence classes of arrows in $\Gamma$.

    \item We also consider the powers of the ideal $\Rr k(\Ga)$, which are defined recursively: $\Rr^1 k(\Ga) = \Rr k(\Ga)$, and for $n \geq 1$, $\Rr^n k(\Ga) = \Rr^{n-1} k(\Ga)\cdot \Rr k(\Ga)$. Also note that $\Rr^n k(\Ga)$ coincides with the ideal generated by the classes of the paths having length greater than or equal to $n$.
\end{itemize}

\subsection{Connecting the two cases}
\label{subsec:mesh connection}

So mesh categories can be defined in two different ways when $k$ is algebraically closed. To show that they are convergent is the subject of our first main result here, which will be Theorem~\ref{th:universal x generico} below.

In order to make the notations precise, let us agree with the following conventions: if $\Gamma$ is a translation quiver, $\tilde{\Gamma}$ will denote the universal covering of $\Gamma$, $\hat{\Gamma}$ will denote the generic covering, $k_g(\Gamma)$ will denote the mesh category over $\Gamma$ as defined in~\ref{subsec:mesh general} (where $k$ is any and we fix a $k$-modulation over $\Gamma$) and $k_a(\Gamma)$ will denote the mesh category over $\Gamma$ as defined above in~\ref{subsec:mesh ac} (where we supposed $k$ is algebraically closed and we did not need $k$-modulations).

Note that $\Gamma$ may have multiple arrows between two vertices. So denote by $\Gamma^v$ the translation quiver obtained by replacing parallel multiple arrows in $\Gamma$ by a single arrow. Namely, $\Gamma^v$ has the same vertices of $\Gamma$, and if $x,y \in (\Gamma^v)_0$, there will be a unique arrow $x \rightarrow y$ in $\Gamma^v$ if and only if there is (at least) an arrow $x \rightarrow y$ in $\Gamma$.

The following theorem now gives the aforementioned connection between the two definitions of mesh category:

\begin{teo}
\label{th:universal x generico}

    Assume $k$ is algebraically closed. Let $\Gamma$ be a component of the AR-quiver of an algebra $A$, and assume $\Gamma^v$ has the standard $k$-modulation. Let $\pi: \Delta \rightarrow \Gamma$ be a covering of $\Gamma$, with $\Delta^v$ having the modulation induced from the one of $\Gamma^v$. With these notations:

    \begin{enumerate}
        \item $\widetilde{\Gamma^v} = (\hat{\Gamma})^v$;

        \item There is an isomorphism of $k$-linear graded categories $k_a(\Delta) \cong k_g(\Delta^v)$. (In particular $k_a(\hat{\Gamma}) \cong k_g(\widetilde{\Gamma^v})$).

    \end{enumerate}
\end{teo}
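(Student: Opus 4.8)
The plan is to treat the two items separately and then combine them for the parenthetical claim. Throughout I would keep in mind that, since $k$ is algebraically closed, the standard modulation forces $k_X = k$ at every vertex and $\tau_* = \mathrm{id}$, while $M(X,Y) = \operatorname{irr}(X,Y)$ has $k$-dimension equal to the number of parallel arrows $X \to Y$ in $\Gamma$; collapsing those parallel arrows is exactly the passage from $\Gamma$ to $\Gamma^v$.

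For item (1), I would first introduce the collapsing morphism $p \colon \Gamma \to \Gamma^v$, the identity on vertices and sending each arrow to the unique arrow with the same endpoints, and check that $p$ is a morphism of translation quivers compatible with $\tau$ and with the projective/injective labels. The key point is that the extra relation (d) defining the generic homotopy $\sim'$ — which declares parallel arrows homotopic — is precisely what $p$ enforces: relations (a), (b), (c) are transported verbatim by $p$, while (d) becomes vacuous in $\Gamma^v$, so two walks of $\Gamma$ are $\sim'$-homotopic if and only if their $p$-images are $\sim$-homotopic over $\Gamma^v$. Hence $p$ induces a bijection between $\sim'$-classes of walks from the basepoint and $\sim$-classes of walks over $\Gamma^v$, i.e. a bijection $\hat{\Gamma}_0 \to (\widetilde{\Gamma^v})_0$. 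On arrows, two arrows $(\overline{w},\alpha)$ and $(\overline{w},\beta)$ of $\hat{\Gamma}$ arising from parallel $\alpha,\beta$ are parallel in $\hat{\Gamma}$ (their targets agree since $\alpha \sim' \beta$ forces $\overline{\alpha w} = \overline{\beta w}$), so collapsing them in $(\hat{\Gamma})^v$ gives a single arrow matching $(\overline{pw}, p\alpha)$ in $\widetilde{\Gamma^v}$. Verifying that this identification respects $\tau$ and $\sigma$ yields $(\hat{\Gamma})^v = \widetilde{\Gamma^v}$.

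For item (2), the functor $F \colon k_a(\Delta) \to k_g(\Delta^v)$ will be the identity on objects. At the level of path categories, the degree-one space of $k_a(\Delta)$ from $x$ to $y$ is the span of the arrows $x \to y$ of $\Delta$, whose dimension equals $\dim_k M_{\Delta^v}(x,y)$; fixing, for every pair $(x,y)$, a bijection between those arrows and a $k$-basis $\{e_\beta\}$ of $M_{\Delta^v}(x,y)$ determines a graded $k$-linear isomorphism $k\Delta \xrightarrow{\sim} k(\Delta^v)$ (send a path to the tensor of the corresponding basis elements; this carries a basis of paths to a basis of iterated tensors). It remains to choose the bases so that $F$ maps the mesh ideal onto the mesh ideal, for which it suffices that $F$ send each generator $\sum_{\alpha} \alpha\,(\sigma\alpha)$ to $\gamma_x$. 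Since the path $\tau x \xrightarrow{\sigma\alpha} y \xrightarrow{\alpha} x$ corresponds to the tensor $e_{\sigma\alpha}\otimes e_\alpha$, and since $\gamma_x = \sum_\alpha e_\alpha^{*}\otimes e_\alpha$ is the canonical element, independent of the chosen basis $\{e_\alpha\}$ of $M(y,x)$, the requirement collapses to the single condition
\[
e_{\sigma\alpha} = e_\alpha^{*} \qquad \text{for every arrow } \alpha \text{ ending at a non-projective vertex},
\]
where $e_\alpha^{*}$ is the dual basis element under the non-degenerate form $\sigma_*$ of the mesh at $x$.

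The main obstacle is the consistency of this condition across all meshes, because each arrow $\sigma\alpha$ also ends at a vertex and thus plays the role of an $\alpha$-arrow in further meshes. I would organise the choice along the $\sigma$-orbits of arrows: the basis on an arrow-set $\{z \to w\}$ is a free choice exactly when $z$ is injective (no mesh at $\tau^{-1}z$ constrains it) and is otherwise forced to be the $\sigma_*$-dual of the preceding arrow-set; since each arrow-set is constrained by at most one mesh, namely the one at $\tau^{-1}z$, the propagation is well-defined on every infinite $\sigma$-orbit. This already settles the principal case, since the generic covering $\hat{\Gamma}$ is simply connected and all its $\sigma$-orbits are infinite. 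For a general covering $\Delta$ with possibly finite $\sigma$-orbits (e.g. over periodic components) one must additionally verify a monodromy condition: going once around a finite orbit, the successive $\sigma_*$-dualizations must compose to the identity, so that $e_{\sigma^n\alpha}=e_\alpha$ is compatible with the free/forced assignment. I expect this double-dual compatibility to be the genuinely delicate step; it should follow from the definition of $\sigma_*$ in the standard modulation through almost split sequences (together with $\tau_* = \mathrm{id}$), which is precisely the datum making the mesh relations correspond to composing the connecting morphisms of those sequences. Once $F$ is defined it is graded by construction, hence carries $\Rr^n k_a(\Delta)$ onto $\Rr^n k_g(\Delta^v)$, giving the desired isomorphism of $k$-linear graded categories. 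Finally, taking $\Delta = \hat{\Gamma}$ and invoking item (1) gives $k_a(\hat{\Gamma}) \cong k_g((\hat{\Gamma})^v) = k_g(\widetilde{\Gamma^v})$, which is the parenthetical assertion.
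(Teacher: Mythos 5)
Your treatment of item (1) is fine (the paper simply leaves that item to the reader), and for item (2) your skeleton coincides with the paper's: build a graded isomorphism of path categories by matching the arrows $x\to y$ of $\Delta$ with a chosen basis of $M_{\Delta^v}(x,y)$, then note that the combinatorial mesh $\sum_\alpha \alpha(\sigma\alpha)$ goes to the canonical element $\gamma_x$ precisely when $e_{\sigma\alpha}=e_\alpha^*$. The genuine gap is exactly the step you leave as an ``expectation.'' The theorem is stated for an \emph{arbitrary} covering $\pi\colon\Delta\to\Gamma$, and the identity $\Gamma\to\Gamma$ is such a covering; so whenever $\Gamma$ has $\tau$-periodic vertices (a stable tube, say) the quiver $\Delta$ really does have cyclic $\sigma$-orbits, and your propagation scheme then needs the monodromy claim that the composite of the successive $\sigma_*$-dualizations around such an orbit is the identity. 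This is not a formality and does not follow from $V^{**}\cong V$: consecutive dualizations are taken with respect to the bilinear forms of two \emph{different} meshes (for $\alpha\colon z\to w$, first the mesh ending at $w$, then the mesh ending at $z$), so what is required is a compatibility between the forms $\sigma_*$ of adjacent meshes of the standard modulation, to be extracted from their definition through almost split sequences; you never attempt this. (Also, your statement that all $\sigma$-orbits of $\hat{\Gamma}$ are infinite is both unproved and slightly off: orbits may terminate at projective or injective vertices, which is harmless --- what you need, and what is true but standard rather than proved by you, is that $\hat{\Gamma}$ has no \emph{cyclic} orbits.) As written, your argument proves the theorem only for coverings without $\tau$-periodic vertices, which yields the parenthetical case $\Delta=\hat{\Gamma}$ but not the statement as given.

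For comparison, the paper never propagates choices, so well-definedness is not where its difficulty sits: it fixes one representative arrow $\alpha_l$ and one basis $\{u_i\}$ per edge $l$ of the orbit graph of $\Delta$, and assigns to $\sigma^r\alpha_l$ the basis $\{u_i\}$ for $r$ even and the dual basis $\{u_i^*\}$ for $r$ odd. Since the assignment depends only on the parity of $r$, consistency on cyclic orbits is automatic, and the entire burden lands on the mesh-relation check, which the paper disposes of with the phrase that $\{u_i\}$ is in turn dual to $\{u_i^*\}$ ``because $V^{**}\cong V$.'' That invocation conceals the very same adjacent-mesh compatibility that blocks you (together with an implicit identification of $\operatorname{irr}(\tau^m X,\tau^m Y)$ with $\operatorname{irr}(X,Y)$ along $\tau$-orbits). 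So to complete your proposal you should either isolate and prove that compatibility lemma for the standard modulation, or exploit a freedom you never use: the isomorphism only has to carry each mesh to a \emph{nonzero scalar multiple} of $\gamma_x$, since it is the ideal and not the element that must be preserved, and this strictly weakens the constraint system around cyclic orbits. Without one of these, the proposal is incomplete for general coverings.
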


\begin{proof}
    \begin{enumerate}
        \item The proof is direct and left to the reader.

        \item The idea for proving the existence of the isomorphism $\iota$ is to prove that the path categories $k_g \Delta^v$ and $k_a \Delta$ are isomorphic and then observe that this isomorphism preserves mesh relations. From this a new isomorphism $k_g(\Delta^v) \cong k_a(\Delta)$ will be induced.

    Observe that $k_g \Delta^v$ and $k_a \Delta$ are both tensor categories with the same class of objects, thus in order to prove that they are isomorphic it is sufficient to prove that there are bijections between the sets that generate the space of morphisms.

    Let $o(\Delta^v)$ be the orbit graph of $\Delta$: this is the graph whose vertices are the orbits of the action of the translation $\tau$ over $\Delta$ and such that there is only one edge $o(x) \text{---} o(y)$ between the orbits of two vertices $x,y\in \Delta_0$ if and only if there are $m,n \in \mathbb{Z}$ such that $\tau^m x = \tau^n y$. Let $\theta$ be a function defined the following way: if $\alpha: x \rightarrow y$ is an arrow of $\Delta^v$, then there is only one edge $l: o(x) \text{---} o(y)$ in the graph $o(\Delta^v)$ and we define $\theta(\alpha) = l$.

    For each edge $l: X \text{---} Y$, choose an arrow $\alpha_l: x \rightarrow y$ such that $\theta(\alpha_l) = l$. (Note that $X = o(x)$ and $Y = o(y)$). Now choose irreducible morphisms $u_1,\cdots,u_d:\pi x \rightarrow \pi y$ in such a way that $\{\overline{u_1},\cdots,\overline{u_d}\}$ forms a basis of $\operatorname{irr}(\pi x, \pi y) = M(x,y)$.

    Then either $\alpha_l$ is the only arrow whose image under $\theta$ is $l$ or then, choosing another $\alpha_l$ if necessary, we can suppose $\sigma \alpha_l$ is defined. In that case there will be irreducible morphisms $v_1, \cdots, v_d: \tau \pi y \rightarrow \pi x$ such that $\{\overline{v_1},\cdots,\overline{v_d}\}$ is a basis of $\operatorname{irr}(\tau \pi y, \pi x)$, and $(\overline{v_1},\cdots,\overline{v_d}) =(\overline{u_1}^*,\cdots,\overline{u_d}^*)$ is the dual basis of $(\overline{u_1},\cdots,\overline{u_d})$ (considering the isomorphism $\operatorname{irr}(\tau \pi y, \pi x) \cong \operatorname{irr}(\pi x, \pi y)^*$ given by the $k$-modulation).

    Let now $\alpha: x \rightarrow y$ be an arrow in $\Delta^v$. then there is $r \in \mathbb{Z}$ such that $\alpha = \sigma^r \alpha_{\theta(\alpha)}$. If $r$ is even, we can identify the set of arrows from $x$ to $y$ in $\Delta$ with the elements of the set $\{\overline{u_1},\cdots,\overline{u_d}\}$, and in that case we define $\iota:k_g \Delta^v_1 (x,y)\rightarrow k_a \Delta_1 (x,y)$ as being the only $k$-linear map that takes $\{\overline{u_1},\cdots,\overline{u_d}\}$ to $\{\overline{u_1},\cdots,\overline{u_d}\}$. In case $r$ is odd, we identify the arrows from $x$ to $y$ with the elements of the set $\{\overline{v_1},\cdots,\overline{v_d}\}$, and we define $\iota:k_g \Delta^v_1 (x,y)\rightarrow k_a \Delta_1 (x,y)$ as being the only $k$-linear map that takes $\{\overline{v_1},\cdots,\overline{v_d}\}$ to $\{\overline{v_1},\cdots,\overline{v_d}\}$.

    It is clear that every $\iota:k_g \Delta^v_1 (x,y)\rightarrow k_a \Delta_1 (x,y)$ thus defined is a $k$-linear isomorphism, and that concludes the proof that $k_g\Delta^v \cong k_a \Delta$. Let us see that relations are preserved. 
    
    Let $y \in \Delta_0$ be a non-projective vertex, and let $y_1,\cdots, y_r$ be the immediate predecessors of $y$. For $i$ between 1 and $r$, note that, from the construction above, we have already chosen the arrows $\alpha_{i1}, \cdots, \alpha_{id_i}: y_i \rightarrow y$: with the notation above, they have either the form $\overline{u_{i1}},\cdots,\overline{u_{id_i}}$ or the form $\overline{v_{i1}},\cdots,\overline{v_{id_i}}$. In both cases we have that the arrows $\tau y \rightarrow y_i$ form a set $\{\alpha_{i1}^* \cdots \alpha_{id_i}^*\}$ dual to $\{\alpha_{i1} \cdots \alpha_{id_i}\}$, in the first case because we have chosen the bases like that, and in the second case because we use that $V^{**} \cong V$ for every finite-dimensional vector space $V$.

    \begin{displaymath}
        \xymatrix{ && y_1 \ar@<.5ex>[ddrr]^{\alpha_{11}} \ar@<-.5ex>[ddrr]_{\alpha_{1n_1}} && \\
        &&&& \\
        \tau y \ar@<.5ex>[uurr]^{\alpha_{11}^*} \ar@<-.5ex>[uurr]_{\alpha_{1n_1}^*} \ar@<.5ex>[ddrr]^{\alpha_{r1}^*} \ar@<-.5ex>[ddrr]_{\alpha_{rn_r}^*} && \vdots && y\\
        &&&& \\
        && y_r \ar@<.5ex>[uurr]^{\alpha_{r1}} \ar@<-.5ex>[uurr]_{\alpha_{rn_r}} && }
    \end{displaymath}

    Observe that, with these notations, the mesh relation that ends in $y$ is $\sum_{i=1}^r \sum_{j=1}^{n_i} \alpha_{ij} \alpha_{ij}^*$ either at the definition of $k_g(\Delta^v)$ or at the definition $k_a(\Delta)$, 
    proving that the isomorphism we have built between the path categories preserves mesh relations and thus concluding the proof of this item.
    \end{enumerate}
\end{proof}

\section{Riedtmann functors}
\label{sec:Riedtmann}

\subsection{Strongly Riedtmann functors}

Having defined the mesh category over a translation quiver $\Gamma$, we proceed to the definition of the comparisons between the mesh category and the module category, which have been called \textbf{well-behaved functors} in the literature, although we will be renaming them as \textbf{Riedtmann functors}, as explained in the introduction. 

From now on, we need to make additional assumptions on the base field $k$. We will assume that $k$ is \textbf{perfect}, that is, every field extension of $k$ is separable. The reader may notice that this condition on $k$ is not strictly necessary to state the definition of Riedtmann functors. However, without it the existence of these functors could hardly be expected. The hypothesis of $k$ being perfect plays its role in two ways: the first is through the following version of the Wedderburn-Malcev Theorem:

\begin{teo}[Wedderburn-Malcev]
    \label{th:wed-malcev}

    Let $k$ be a perfect field, and let $\Lambda$ be a finite dimensional $k$-algebra. Then the canonical projection $\pi: \Lambda \rightarrow \Lambda/\rad \Lambda$ admits a section $\iota: \Lambda/\rad \Lambda \rightarrow \Lambda$, i.e., $\pi \circ \iota = \operatorname{Id}$. 
\end{teo}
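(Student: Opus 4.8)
The plan is to construct $\iota$ as an \emph{algebra} homomorphism (this is the real content of the statement, since a mere $k$-linear section exists trivially), by lifting it across the nilpotent filtration of $N := \rad \Lambda$ and using in an essential way that the semisimple quotient $S := \Lambda/N$ is a \emph{separable} $k$-algebra. This separability is the one and only place where perfectness of $k$ enters: over a perfect field every finite-dimensional semisimple $k$-algebra stays semisimple after an arbitrary base-field extension, which is precisely the definition of separability; concretely $S$ then admits a separability idempotent $e = \sum_i a_i \otimes b_i \in S \otimes_k S^{\mathrm{op}}$ with $\sum_i a_i b_i = 1$ and $se = es$ for all $s \in S$, equivalently the Hochschild cohomology $H^n(S, W)$ vanishes for all $n \geq 1$ and every $S$-bimodule $W$. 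I would isolate this as the key lemma before anything else.

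Granting $H^2(S,-) = 0$, the heart of the argument is the \textbf{square-zero case}. Fix $m$ with $N^m = 0$ and first treat a square-zero surjection of $k$-algebras $p : B \twoheadrightarrow S$ whose kernel $K := \Ker p$ satisfies $K^2 = 0$; then $K$ is naturally an $S$-bimodule. Choosing any unital $k$-linear section $\rho$ of $p$, the defect $f(x,y) := \rho(x)\rho(y) - \rho(xy)$ lands in $K$ and, by associativity in $B$, is a normalized Hochschild $2$-cocycle. Since $H^2(S,K) = 0$ it is a coboundary, $f(x,y) = x\,g(y) - g(xy) + g(x)\,y$ for some $k$-linear $g : S \to K$, and I would then verify directly that $\iota := \rho - g$ is multiplicative: the cross terms $\rho(x)g(y)$ and $g(x)\rho(y)$ collapse to the bimodule actions $x\,g(y)$ and $g(x)\,y$, the term $g(x)g(y)$ vanishes because $K^2 = 0$, and the cocycle identity cancels everything down to $\iota(xy)$.

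The general case follows by induction on the nilpotency index $m$ of $N$. Applying the square-zero result to the surjection $\Lambda/N^{i} \twoheadrightarrow \Lambda/N^{i-1}$, whose kernel $N^{i-1}/N^{i}$ squares to zero, I would lift an algebra section $S \to \Lambda/N^{i-1}$ to an algebra section $S \to \Lambda/N^{i}$; starting from the identity section onto $\Lambda/N = S$ and iterating up to $N^m = 0$ produces the desired $\iota : S \to \Lambda$ with $\pi \circ \iota = \Id$.

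I expect the main obstacle to be the key lemma rather than the lifting machinery: deducing cleanly from perfectness that $S$ is separable (handling the division-ring factors and the separability of the residue-field extensions appearing in the Wedderburn decomposition of $S$) and then converting separability into the vanishing of $H^2$. The inductive step is essentially formal, the only delicate verification being that the corrected map $\rho - g$ is genuinely an algebra homomorphism, which is exactly where the cocycle/coboundary relation and the square-zero hypothesis $K^2 = 0$ are consumed.
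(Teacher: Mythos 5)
The paper does not actually prove this statement: it records it as the Wedderburn--Malcev theorem and refers to \cite{Pierce}, \S 11.6, so there is no internal argument to compare yours against. Your proposal is correct, and it is essentially the standard proof found in such references: you rightly identify that the content is the existence of an \emph{algebra} section, that perfectness of $k$ enters only to make $S=\Lambda/\rad\Lambda$ separable (via separability of the centres of the division-algebra factors in its Wedderburn decomposition), and that separability kills $H^2(S,-)$, after which the radical filtration is split one square-zero layer at a time by correcting a linear section with a cobounding cochain; your verification that $\rho-g$ is multiplicative is the right computation, with $K^2=0$ absorbing the quadratic term. Two details need care when you write it out in full. First, the inductive step requires the \emph{lifting} form of your square-zero lemma: you must lift an algebra map $\sigma_{i-1}:S\to\Lambda/N^{i-1}$ through $q_i:\Lambda/N^{i}\twoheadrightarrow\Lambda/N^{i-1}$, which is not verbatim the statement you proved (a section of a surjection onto $S$); it reduces to it by passing to the subalgebra $q_i^{-1}(\sigma_{i-1}(S))\subseteq\Lambda/N^{i}$, which is a square-zero extension of $S$, or by rerunning the cocycle computation relative to $\sigma_{i-1}$. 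Second, unitality of $\iota$ requires normalized cochains (choose $\rho(1)=1$ and the cobounding $g$ with $g(1)=0$), which you flag but should make explicit, since the paper uses $\iota$ to produce unital subalgebras $\Bbbk_X\subseteq\operatorname{End}_A(X)$. With those points supplied the argument is complete; what the paper's citation buys is brevity appropriate to a survey, while your proof buys self-containedness and makes transparent exactly where perfectness is used, something the paper only asserts informally.
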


\begin{proof}
    See, e. g. , \cite{Pierce}, $\S$ 11.6.
\end{proof}

The second is in the following result:

\begin{teo}
\label{th:prod tensorial eh semissimples}
    Let $k$ be a perfect field. If $A$ and $B$ are two finite dimensional division $k$-algebras, then the tensor product $A \otimes_k B$ is a semisimple algebra.
\end{teo}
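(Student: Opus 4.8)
The plan is to show that the finite-dimensional $k$-algebra $A \otimes_k B$ has vanishing Jacobson radical, which for a finite-dimensional algebra is equivalent to semisimplicity. The main device is base change to the algebraic closure $\bar k$ of $k$, so I would first reduce to showing that $(A \otimes_k B) \otimes_k \bar k$ is semisimple. Writing $J = \operatorname{rad}(A \otimes_k B)$, the ideal $J$ is nilpotent, so $J \otimes_k \bar k$ is a nilpotent ideal of $(A \otimes_k B) \otimes_k \bar k$ and is therefore contained in $\operatorname{rad}\big((A \otimes_k B) \otimes_k \bar k\big)$. If the latter vanishes, then $J \otimes_k \bar k = 0$, and since $\dim_k J = \dim_{\bar k}(J \otimes_k \bar k)$ this forces $J = 0$.

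Next I would invoke the standard base-change identity $(A \otimes_k B) \otimes_k \bar k \cong (A \otimes_k \bar k) \otimes_{\bar k} (B \otimes_k \bar k)$ to transfer the question to algebras over the algebraically closed field $\bar k$. Over $\bar k$, a finite-dimensional semisimple algebra is a finite product of matrix algebras, and since $M_m(\bar k) \otimes_{\bar k} M_n(\bar k) \cong M_{mn}(\bar k)$, the tensor product over $\bar k$ of two semisimple $\bar k$-algebras is again semisimple. Thus it remains to check that both $A \otimes_k \bar k$ and $B \otimes_k \bar k$ are semisimple; by symmetry I treat a single finite-dimensional division $k$-algebra $D$ and show $D \otimes_k \bar k$ is semisimple, i.e. that $D$ is a separable $k$-algebra.

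This last step is where the perfectness of $k$ is essential, and I expect it to be the main obstacle. Let $Z = Z(D)$ be the center of $D$; it is a finite field extension of $k$, and $D$ is a central simple algebra over $Z$. Because $k$ is perfect, the algebraic extension $Z/k$ is separable, so $Z \otimes_k \bar k \cong \prod_{\sigma} \bar k$, a product of $[Z:k]$ copies of $\bar k$ indexed by the $k$-embeddings $\sigma : Z \to \bar k$. Writing $D \otimes_k \bar k \cong D \otimes_Z (Z \otimes_k \bar k) \cong \prod_{\sigma} (D \otimes_{Z,\sigma} \bar k)$, each factor is a central simple $Z$-algebra base-changed along $\sigma$ to the algebraically closed field $\bar k \supseteq \sigma(Z)$, hence isomorphic to a full matrix algebra $M_d(\bar k)$ with $d^2 = \dim_Z D$. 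A finite product of such simple algebras is semisimple, so $D \otimes_k \bar k$ is semisimple, as required.

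Putting these pieces together yields that $(A \otimes_k \bar k) \otimes_{\bar k} (B \otimes_k \bar k)$, and therefore $A \otimes_k B$, is semisimple. The only genuinely delicate inputs are the separability of $Z/k$, which fails without the perfectness hypothesis (a purely inseparable factor produces nilpotents, as already for $\mathbb{F}_p(t) \subset \mathbb{F}_p(t^{1/p})$), together with the fact that a central simple algebra is split by any field extension of its center. Both are classical, so once they are cited the argument is routine.
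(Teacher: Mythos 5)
Your proof is correct, but it does not follow the paper's route, because the paper gives no argument at all: it simply cites \cite{Pierce} (the Corollary on p.~188 and Corollary~b on p.~192), i.e.\ it appeals to the theory of separable algebras, where the relevant facts are (roughly) that over a perfect field every finite-dimensional semisimple algebra is separable, and that tensoring a separable algebra with a semisimple one gives a semisimple algebra. Your argument is in effect a self-contained unfolding of that theory in the case at hand: you reduce semisimplicity to vanishing of the radical after base change to $\bar{k}$ (using nilpotence of $J=\operatorname{rad}(A\otimes_k B)$ and $\dim_k J=\dim_{\bar{k}}(J\otimes_k\bar{k})$), factor $(A\otimes_k B)\otimes_k\bar{k}\cong(A\otimes_k\bar{k})\otimes_{\bar{k}}(B\otimes_k\bar{k})$, and prove each factor is semisimple by viewing a division algebra $D$ as central simple over its center $Z$, using perfectness exactly once to get $Z\otimes_k\bar{k}\cong\prod_{\sigma}\bar{k}$, and splitting each $D\otimes_{Z,\sigma}\bar{k}$ as a matrix algebra. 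What the citation buys the paper is brevity; what your proof buys is a complete argument that makes visible the single point where perfectness is used (separability of $Z/k$). One small slip in your closing summary: a central simple algebra is \emph{not} split by an arbitrary field extension of its center (already $\mathbb{H}\otimes_{\mathbb{R}}\mathbb{R}=\mathbb{H}$ is not a matrix algebra over $\mathbb{R}$); the facts you actually use in the body --- that base change along a field embedding preserves central simplicity, and that a central simple algebra over an \emph{algebraically closed} field is a matrix algebra --- are the correct ones, so the proof itself is unaffected.
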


\begin{proof}
    See, e. g., \cite{Pierce}, Corollary on page 188 and Corollary b on page 192.
\end{proof}

The definition of Riedtmann's well-behaved functors will follow the approach from \cite{CMT2}, which is where this concept was defined for perfect fields. (Originally, in \cite{Rie} and \cite{BG}, the definition was restricted to algebraically closed fields).

Let $\Gamma$ be a component of the AR-quiver of an algebra $A$, and let $X$ be an indecomposable module of $\Gamma$. Then $k_X \doteq \operatorname{End}_A(X)/\rad_A(X,X)$ is a division algebra. 

By Theorem~\ref{th:wed-malcev}, there is a section $k_X = \operatorname{End}_A(X)/\rad_A(X,X) \hookrightarrow \operatorname{End}_A(X)$ of the canonical projection $\operatorname{End}_A(X) \twoheadrightarrow \operatorname{End}_A(X)/\rad_A(X,X)$. Let then $\Bbbk_X$ be the image of this section. Then $\Bbbk_X$ is a subalgebra of $\operatorname{End}_A(X)$ and there is an isomorphism $\Bbbk_X \simeq k_X$ via the restriction of the canonical projection $\operatorname{End}_A(X) \twoheadrightarrow k_X$. 

\begin{defi}[\cite{CMT2}]
    With the notations above, we say that $\Bbbk_X$ is a \textbf{section} of $k_X$.
\end{defi}

Let now $X \rightarrow Y$ be an arrow of $\Gamma$. Fix a section $\Bbbk_X$ of $k_X$ and a section $\Bbbk_Y$ of $k_Y$. Since $\Bbbk_X \simeq k_X$ and $\Bbbk_Y \simeq k_Y$, it is easy to transform $\operatorname{irr}(X,Y) \doteq \rad_A(X,Y)/\rad^2_A(X,Y)$, which is a $(k_X - k_Y)$-bimodule, into a $(\Bbbk_X - \Bbbk_Y)$-bimodule.

Since $k_X$ and $k_Y$ are division algebras, $\Bbbk_X \simeq k_X$ and $\Bbbk_Y \simeq k_Y$, by Theorem~\ref{th:prod tensorial eh semissimples}, we obtain that the algebra $\Bbbk_X^{op} \otimes_k \Bbbk_Y$ is semisimple. Thus the canonical projection $\rad_A(X,Y) \twoheadrightarrow \operatorname{irr}(X,Y)$ between $\Bbbk_X^{op} \otimes_k \Bbbk_Y$-modules admits a section $\iota: \operatorname{irr}(X,Y) \rightarrow \rad_A(X,Y)$.

\begin{defi}[\cite{CMT2}]
    With the notations above, $\iota: \operatorname{irr}(X,Y) \rightarrow \rad_A(X,Y)$ is a \textbf{$(\Bbbk_X - \Bbbk_Y)$-linear section} of $\operatorname{irr}(X,Y)$.
\end{defi}

With the terminology introduced above, we are ready for the definition of well-behaved functors:

\begin{defi}[\cite{CMT2}]
Let $k$ be a perfect field and let $\Gamma$ be a component of the AR-quiver of a $k$-algebra $A$. Let $\pi:\Delta \rightarrow \Gamma$ be a covering of $\Gamma$. Assume $\Gamma$ is equipped with the standard $k$-modulation and that $\Delta$ is equipped with the $k$-modulation induced from $\Gamma$. In these conditions, a $k$-linear functor $F:k(\Delta) \rightarrow \ind \Gamma$ is called \textbf{(strongly) well-behaved} or \textbf{(strongly) Riedtmann} if it satisfies the following conditions:

\begin{enumerate}
    \item For every vertex $x$ of $\Delta$, $Fx = \pi x$.

    \item For every vertex $x$ of $\Delta$, the morphism  of $k$-algebras $k_x \doteq k_{\pi x} \rightarrow \operatorname{End}_A(\pi x)$ given by $u \mapsto F(u)$ is a section of the canonical projection $\operatorname{End}_A(\pi x) \twoheadrightarrow k_{\pi x}$. We denote the image of that section by $\Bbbk_x$.

    \item For every arrow $x \rightarrow y$ of $\Delta$, the composite $k$-linear map
    
    $$\operatorname{irr}(\pi x, \pi y) = M(x,y) \hookrightarrow k(\Delta)(x,y) \xrightarrow{F} \rad_A(\pi x, \pi y)$$
    
    is a $(\Bbbk_x - \Bbbk_y)$-linear section of the canonical projection.
\end{enumerate}
\end{defi}

\subsection{Weakly Riedtmann functors}

In case $k$ is algebraically closed (which implies that $k$ is perfect), the definition of Riedtmann's well-behaved functors was made earlier than in \cite{CMT2}. Let us state the definition in this case.

\begin{defi}[\cite{CMT1}]
Suppose $k$ is algebraically closed. Let $\Gamma$ be a component of the AR-quiver of $A$ and let $\pi: \Delta \rightarrow \Gamma$ be a covering of quivers. A $k$-linear functor $F:k(\Delta) \rightarrow \ind \Gamma$ is called \textbf{(weakly) well-behaved} or \textbf{(weakly) Riedtmann} if the following conditions are verified for every vertex $x$ of $\Delta$:

\begin{enumerate}
    \item $Fx = \pi x$
    \item if $\alpha_1: x \rightarrow x_1, \cdots, \alpha_r: x \rightarrow x_r$ are all the arrows in $\Delta$ that start at $x$, then $[F(\overline{\alpha_1}) \cdots F(\overline{\alpha_r})]^t: Fx \rightarrow Fx_1 \oplus \cdots \oplus Fx_r$ is a source morphism (i.e., a left minimal almost split morphism).
    
    \item if $\alpha_1: x_1 \rightarrow x, \cdots, \alpha_r: x_r \rightarrow x$ are all the arrows in $\Delta$ that end at $x$, then $[F(\overline{\alpha_1}) \cdots F(\overline{\alpha_r})]: Fx_1 \oplus \cdots \oplus Fx_r \rightarrow Fx$ is a sink morphism (i.e, a right minimal almost split morphism).
\end{enumerate}
\end{defi}

\begin{obs}
    To be precise,  Riedtmann \cite{Rie} and Bongartz-Gabriel \cite{BG} only define well-behaved functors when the covering $\pi$ is the universal covering $\tilde{\Gamma} \rightarrow \Gamma$, and Chaio-Le Meur-Trepode \cite{CMT1} only define them when $\pi$ is the generic covering $\hat{\Gamma} \rightarrow \Gamma$. But this assumption on $\pi$ is irrelevant for the above definition, and there are Riedtmann functors over other coverings. Supposing that $\pi$ is the universal/generic covering will actually play a role in the existence of Riedtmann functors, as we shall see below. Already in \cite{CMT2}, the authors define well-behaved functors over any covering.
\end{obs}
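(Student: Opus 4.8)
The remark makes two logically independent assertions, and the plan is to dispatch them separately.

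For the first — that requiring $\pi$ to be the universal or generic covering is irrelevant to the \emph{definition} — the argument is pure inspection, and I would present it as such. Conditions (1)--(3) of the definition of a weakly Riedtmann functor refer only to the equalities $Fx=\pi x$ and to the requirement that, for each individual vertex $x$ of $\Delta$, the images $F(\overline{\alpha_i})$ of the arrows starting (resp. ending) at $x$ assemble into a source (resp. sink) morphism at $\pi x$. Every one of these is a \emph{local} condition at a single vertex of $\Delta$, phrased entirely through the map $\pi$ and the almost split theory at $\pi x$ inside $\ind\Gamma$. Nowhere does the definition invoke any global feature that distinguishes $\tilde\Gamma$ or $\hat\Gamma$: not simple connectedness, not the homotopy classes $\overline w$ used to build the vertices of $\tilde\Gamma$, nor the extra relation (d) used to build $\hat\Gamma$. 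Hence the text of the definition is unchanged when $\pi:\Delta\to\Gamma$ is an arbitrary covering, which is exactly the level of generality adopted in \cite{CMT2}. The same inspection applies verbatim to the strongly Riedtmann conditions, since these too are imposed one vertex and one arrow at a time.

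For the second assertion — that Riedtmann functors do exist over coverings other than the universal and generic ones — I would exhibit a concrete example rather than argue abstractly. The candidate is the identity $\mathrm{Id}_\Gamma:\Gamma\to\Gamma$, which is a covering: conditions (1) and (2) in the definition of covering are immediate, and (3) holds because the arrow map is the identity bijection. If $\Gamma$ is moreover a \emph{standard} component, so that the identity-on-objects functor $k(\Gamma)\to\ind\Gamma$ is an equivalence sending classes of arrows to irreducible morphisms, then this standard equivalence is by construction a weakly Riedtmann functor over $\mathrm{Id}_\Gamma$: it satisfies $Fx=x=\mathrm{Id}_\Gamma(x)$, and it carries each mesh of $\Gamma$ to the corresponding almost split sequence, so the combined arrow images are precisely the sink and source morphisms demanded by (2) and (3). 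As a concrete instance I would take a stable tube $\Gamma$ of rank $r\geq 1$: it is standard but not simply connected, its universal covering is $\mathbb{Z}A_\infty\to\Gamma$, and over an algebraically closed field the generic covering coincides with that. Thus $\mathrm{Id}_\Gamma$ is genuinely a covering different from both, and the standard equivalence is a Riedtmann functor living over it.

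The subtlety I would flag — the point that keeps the remark honest and separates it from the forthcoming existence theorem — is that the remark claims existence over \emph{some} other coverings, not over all of them. Existence does fail for a general covering, which is precisely why the next result is stated only for $\tilde\Gamma$ and $\hat\Gamma$; so the example cannot be chosen at random, but must be one where existence is forced by an independent mechanism. Standardness supplies exactly such a mechanism, and the only facts requiring verification are that the chosen $\Gamma$ is standard while being non-simply-connected, both of which are classical for stable tubes. No estimate or delicate construction is involved: the entire content of the proof is the clean separation of ``the definition makes sense over any covering'' (an inspection) from ``a functor exists over a given covering'' (the business of the theorems that follow).
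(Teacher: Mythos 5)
Your proposal is sound, but note that the paper never actually proves this remark: it is a remark, and its justification in the text is precisely your first inspection argument left implicit --- the definition of a weakly (or strongly) Riedtmann functor mentions $\pi$ only through $Fx = \pi x$ and through vertexwise almost split conditions in $\ind \Gamma$, so it reads verbatim over an arbitrary covering, which is the generality adopted in \cite{CMT2} --- together with a forward pointer to the subsection on existence. What you add beyond the paper is an explicit witness for the clause ``there are Riedtmann functors over other coverings'': the identity covering $\Gamma \rightarrow \Gamma$ of a standard component, instantiated on a stable tube. This is a good choice and is consistent with the paper, which defers exactly this question (for which components do Riedtmann functors $k(\Gamma) \rightarrow \ind \Gamma$ exist) to the forthcoming \cite{CC6}.

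Two points in your example deserve tightening. First, standardness only hands you \emph{some} $k$-linear isomorphism $k(\Gamma) \cong \ind \Gamma$; it is not automatic that it is the identity on objects and carries classes of arrows to irreducible morphisms assembling into sink and source maps, so your phrase ``it carries each mesh to the corresponding almost split sequence'' is asserting more than standardness gives. The clean route is: normalize the isomorphism to be the identity on objects, observe that $F(\overline{\alpha})$ is irreducible (an isomorphism of categories preserves the categorical radical, and in the mesh category of a tube the positive-degree ideal is that radical), and then invoke the paper's own remark that for components with \emph{trivial valuation} --- tubes qualify --- a functor with $Fx = \pi x$ sending arrow classes to irreducible morphisms is already weakly Riedtmann. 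That trivial-valuation hypothesis is doing real work and should be made explicit, since the implication fails for components with nontrivial valuation. Second, your closing claim that existence fails for a general covering is plausible but unsupported by anything you wrote; since the remark does not assert it, it should be flagged as an expectation rather than a fact.
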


\begin{obs}
For a weakly Riedtmann functor $F:k(\Delta) \rightarrow \ind \Gamma$, if $\alpha$ is an arrow of $\Delta$, then $F(\overline{\alpha})$ is an irreducible morphism. (Actually this was the original condition used to define these functors in \cite{BG,Rie}, which only deal with algebras of finite type).

Reciprocally, only in the case where $\Gamma$ has trivial valuation, if $F:k(\Delta) \rightarrow \ind \Gamma$ is a $k$-linear functor such that $Fx = \pi x$ for every vertex $x$ of $\Delta$ and $F(\overline{\alpha})$ is an irreducible morphism for every arrow $\alpha$ of $\Delta$, then $F$ is a weakly Riedtmann functor.
\end{obs}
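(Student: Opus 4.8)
The statement has two halves, which I would treat separately; throughout, recall that since $k$ is algebraically closed every residue division algebra $k_N$ equals $k$, so each $\operatorname{irr}(M,N)=\rad_A(M,N)/\rad^2_A(M,N)$ is a genuine $k$-vector space whose dimension is the number of arrows $M\to N$ in $\Gamma$.

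For the forward implication, let $\alpha:x\to y$ be an arrow of $\Delta$. List all arrows of $\Delta$ starting at $x$ as $\alpha=\alpha_1,\alpha_2,\dots,\alpha_r$, with $x_1=y$. Condition (2) of the definition says precisely that $h=[F(\overline{\alpha_1})\cdots F(\overline{\alpha_r})]^t\colon Fx\to Fx_1\oplus\cdots\oplus Fx_r$ is left minimal almost split. Since $Fx=\pi x$ is indecomposable, I would apply the characterization of irreducible morphisms recalled in Subsection~\ref{subsec:ar}: writing $h=(F(\overline{\alpha_1}),g')^t$ with $g'=(F(\overline{\alpha_2}),\dots,F(\overline{\alpha_r}))^t$ exhibits $F(\overline{\alpha})=F(\overline{\alpha_1})$ as the distinguished component of a left minimal almost split morphism out of the indecomposable $Fx$, whence $F(\overline{\alpha})$ is irreducible. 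This uses no hypothesis on the valuation.

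For the converse, assume $\Gamma$ has trivial valuation and that $F$ is $k$-linear with $Fx=\pi x$ and $F(\overline{\alpha})$ irreducible for every arrow $\alpha$. By duality it suffices to verify condition (2). Fix $x$ and let $\alpha_1\colon x\to x_1,\dots,\alpha_r\colon x\to x_r$ be all arrows of $\Delta$ starting at $x$. The first step is to identify the codomain: by the covering axiom the $\alpha_i$ correspond bijectively to the arrows of $\Gamma$ starting at $\pi x$, and trivial valuation forces these to have pairwise distinct targets; hence the $Fx_i=\pi x_i$ are pairwise non-isomorphic and are exactly the indecomposables $N$ admitting an arrow $\pi x\to N$, each with $\dim_k\operatorname{irr}(\pi x,N)=1$. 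Consequently $E:=Fx_1\oplus\cdots\oplus Fx_r$ is, up to isomorphism, the codomain of a left minimal almost split morphism $g=(g_1,\dots,g_r)^t\colon Fx\to E$, whose existence I take from Auslander--Reiten theory (using $Fx\to Fx/\operatorname{soc}(Fx)$ should $Fx$ be injective). It remains to upgrade $h=[F(\overline{\alpha_1})\cdots F(\overline{\alpha_r})]^t$ to a left minimal almost split morphism. Each $g_i$ is irreducible (forward direction) and each $F(\overline{\alpha_i})$ is irreducible by hypothesis, so both have nonzero class in the one-dimensional space $\operatorname{irr}(Fx,Fx_i)$; write $\overline{F(\overline{\alpha_i})}=\lambda_i\,\overline{g_i}$ with $\lambda_i\in k^{\times}$. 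Put $u_0=\operatorname{diag}(\lambda_1,\dots,\lambda_r)\in\operatorname{Aut}(E)$, so that $h-u_0g$ has every component in $\rad^2_A(Fx,Fx_i)$. Using the standard description $\rad^2_A(Fx,M)=\rad_A(E,M)\,g$ (valid because $g$ is left almost split out of the indecomposable $Fx$), I write $h-u_0g=\beta g$ for some $\beta\in\rad_A(E,E)$. Then $h=(u_0+\beta)g=u_0(\Id+u_0^{-1}\beta)g$, and since $u_0^{-1}\beta$ lies in the Jacobson radical $\rad_A(E,E)$ of $\operatorname{End}_A(E)$ the factor $u=u_0+\beta$ is invertible. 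Thus $h=ug$ with $u$ an automorphism, so $h$ is left minimal almost split because $g$ is; this is condition (2), and (3) follows dually.

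The only input beyond the excerpt is the identity $\rad^2_A(Fx,-)=\rad_A(E,-)\,g$, and making this precise (together with the fact that $\rad_A(E,E)$ is the Jacobson radical of $\operatorname{End}_A(E)$) is the one genuinely technical point I would expect to spell out. I would also emphasize \emph{where} trivial valuation is indispensable: if some $\dim_k\operatorname{irr}(\pi x,N)>1$, then $E$ acquires repeated indecomposable summands and the scalar correction $u_0$ must be replaced by the demand that the classes of the corresponding components form a \emph{basis} of $\operatorname{irr}(\pi x,N)$; individual irreducibility of the $F(\overline{\alpha_i})$ no longer forces linear independence, so the converse genuinely fails outside the trivial-valuation case, which is exactly what the phrase ``only in the case'' records.
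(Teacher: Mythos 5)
Your proof is correct, and in fact the paper states this remark without giving any proof, so your argument supplies details the paper leaves to the reader. Both halves are sound: the forward direction is exactly the component-of-a-source-morphism characterization of irreducible morphisms recalled in the paper's Auslander--Reiten subsection, and the converse correctly exploits trivial valuation to get pairwise non-isomorphic targets with $\dim_k\operatorname{irr}(\pi x,\pi x_i)=1$, then upgrades $h$ to a left minimal almost split morphism via the factorization $h=(u_0+\beta)g$ with $\beta\in\rad_A(E,E)$, where the identity $\rad^2_A(Fx,M)=\rad_A(E,M)\,g$ and the invertibility of $u_0+\beta$ are justified exactly as they should be.
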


\subsection{Connecting both definitions of Riedtmann functors}

Having given two definitions of Riedtmann functors, we need to connect them with each other, thus proving that they are convergent, as we did in~\ref{subsec:mesh connection} with mesh categories. This will be done through our second main theorem stated right below, which also explains why the first definition has been called here `strong', and the other, `weak':

\begin{teo}
\label{th:universal x generico 2}

    Suppose $k$ is algebraically closed. Let $\Gamma$ be a component of the AR-quiver of an algebra $A$, and assume $\Gamma^v$ has the standard $k$-modulation. Let $\pi: \Delta \rightarrow \Gamma$ be a covering of $\Gamma$, with $\Delta^v$ having the modulation induced from the one of $\Gamma^v$.

    \begin{enumerate}

        \item If $F:k_a(\Delta) \rightarrow \ind \Gamma$ is a weakly Riedtmann functor, then there is a $k$-linear isomorphism of categories $\iota:k_g(\Delta^v) \xrightarrow{\sim} k_a(\Delta)$ such that $F \circ \iota: k_g(\Delta^v) \rightarrow \ind \Gamma$ is a strongly Riedtmann functor.

        \item If $i:k_a(\Delta) \xrightarrow{\sim} k_g(\Delta^v)$ is a $k$-linear isomorphism of categories and $F:k_g(\Delta^v) \rightarrow \ind \Gamma$ is a strongly Riedtmann functor, then $F \circ i:k_a(\Delta) \rightarrow \ind \Gamma$ is a weakly Riedtmann functor.
    \end{enumerate}
\end{teo}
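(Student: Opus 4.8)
The plan is to exploit the fact that, over an algebraically closed field, all the division algebras $k_x$ collapse to $k$, so the degree‑zero data carries no information: for any $k$‑linear functor $F$ with $Fx=\pi x$ one automatically has $F(\lambda\,\mathrm{Id}_x)=\lambda\,\mathrm{Id}_{\pi x}$, whence the image of $k_x$ is $k\cdot\mathrm{Id}_{\pi x}$ and condition (2) of a strongly Riedtmann functor holds for free. Thus, once objects match, being strongly Riedtmann reduces to the section condition (3) on degree‑one morphisms, while being weakly Riedtmann reduces to the source/sink morphisms being minimal almost split. The bridge between the two descriptions is the classical characterisation (valid since $k=\bar k$): a morphism $[g_1,\dots,g_r]\colon \bigoplus Y_j\to X$ with $X,Y_j$ indecomposable and each $g_j$ irreducible is right minimal almost split precisely when the classes $\overline{g_j}$ exhaust a basis of each $\operatorname{irr}(Y,X)$ with the correct multiplicities, and dually for source morphisms. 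Throughout I use Theorem~\ref{th:universal x generico}, both for the existence of the isomorphisms and for the recipe that builds them by matching a chosen basis of $M(x,y)=\operatorname{irr}(\pi x,\pi y)$ (respectively its $\sigma_*$‑dual basis) to the arrows $x\to y$ of $\Delta$; in particular the multiplicities coming from the covering are already as required by that theorem.

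For part (1), let $F\colon k_a(\Delta)\to\ind\Gamma$ be weakly Riedtmann. Since each $F(\overline\alpha)$ is then irreducible, the sink condition at a vertex $x$ together with the classical characterisation shows that, for every arrow $x'\to x$ of $\Delta^v$, the classes $\{\overline{F(\overline{\beta})}:\beta\colon x'\to x \text{ in }\Delta\}$ form a basis of $\operatorname{irr}(\pi x',\pi x)$. The crucial point is that these bases are automatically $\sigma_*$‑dual along $\sigma$‑orbits. Fix a non‑projective $x$ with $X=\pi x$, let $\alpha_1,\dots,\alpha_r$ be the arrows ending at $x$ and $\sigma\alpha_1,\dots,\sigma\alpha_r$ those starting at $\tau x$. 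Because $F$ kills the mesh relation, $gf=0$ where $g=[F(\overline{\alpha_i})]\colon E\to X$ is the sink morphism and $f=[F(\overline{\sigma\alpha_i})]^t\colon \tau X\to E$ is the source morphism at $\tau x$; as $g$ is right and $f$ is left minimal almost split, $0\to\tau X\xrightarrow{f}E\xrightarrow{g}X\to 0$ is the almost split sequence ending at $X$. Feeding $u=F(\overline{\alpha_i})$ and $v=F(\overline{\sigma\alpha_j})$ into the definition of the standard modulation's form $\sigma_*$, one may take $u'$ to be the canonical inclusion of the $i$‑th summand of $E$ and $v'$ the canonical projection onto the $j$‑th summand, which gives $\sigma_*(\overline{F(\overline{\alpha_i})}\otimes\overline{F(\overline{\sigma\alpha_j})})=\overline{p_j\circ\iota_i}=\delta_{ij}$. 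Hence the $F$‑images of the $\sigma\alpha_j$ are exactly the $\sigma_*$‑dual basis of the $F$‑images of the $\alpha_i$. This is precisely the compatibility needed to run the construction of Theorem~\ref{th:universal x generico} with the distinguished bases chosen to be these $F$‑images: the resulting isomorphism $\iota\colon k_g(\Delta^v)\xrightarrow{\sim}k_a(\Delta)$ then satisfies $\overline{(F\circ\iota)(m)}=m$ for every degree‑one $m$, i.e. $F\circ\iota$ fulfils condition (3) and is strongly Riedtmann.

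For part (2), let $i\colon k_a(\Delta)\xrightarrow{\sim}k_g(\Delta^v)$ be a $k$‑linear isomorphism, which we may take to be the identity on objects, and let $F\colon k_g(\Delta^v)\to\ind\Gamma$ be strongly Riedtmann; condition (1) for $F\circ i$ is then immediate. The observation here is that any isomorphism of categories preserves the radical ideal $\Rr$ and its powers (these are intrinsically the ideals generated by non‑units, equivalently by positive degree), so $i$ induces isomorphisms on the successive quotients $\Rr^n/\Rr^{n+1}$, in particular on the degree‑one pieces $M(x,y)$. On the other hand, being a functor that restricts to a section on degree one, $F$ sends $\Rr^n$ into $\rad^n_A$ and satisfies $\overline{F(m)}=m$ for $m\in M(x,y)$. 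Combining the two, for any arrow $\beta\colon x'\to x$ of $\Delta$ the class $\overline{(F\circ i)(\overline\beta)}$ equals the degree‑one component of $i(\overline\beta)$; as $\beta$ ranges over the arrows $x'\to x$ these components form a basis of $\operatorname{irr}(\pi x',\pi x)$, because $i$ maps the basis of arrows isomorphically onto a basis of $M(x',x)$. Each $(F\circ i)(\overline\beta)$ is therefore irreducible, and the classical characterisation of minimal almost split morphisms (with multiplicities again supplied by the covering) shows the sink and, dually, the source morphisms assembled from these are minimal almost split; hence $F\circ i$ is weakly Riedtmann.

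The main obstacle is the verification, inside part (1), that a weakly Riedtmann functor automatically sends $\sigma$‑related arrows to $\sigma_*$‑dual bases; this is what reconciles the purely local source/sink data of the weak definition with the bilinear‑form data built into the standard modulation, and it is the step that makes the single isomorphism $\iota$ of Theorem~\ref{th:universal x generico} simultaneously compatible with $F$ along every $\sigma$‑orbit. The remaining difficulties are bookkeeping: matching the multiplicities of indecomposable summands of the almost split sequences with the numbers of arrows of $\Delta$ (already resolved by Theorem~\ref{th:universal x generico}) and checking that the basis choices can be made coherently over the whole orbit graph rather than merely arrow by arrow.
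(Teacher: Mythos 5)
Your part (1) is essentially the paper's own proof: the crucial step in both is the same ``Fact'' that a weakly Riedtmann functor sends the arrows $\sigma\alpha$ to the $\sigma_*$-dual basis of the images of the arrows $\alpha$, proved in both cases by assembling the almost split sequence out of the source/sink conditions and evaluating the standard modulation's bilinear form on canonical inclusions and projections to get Kronecker deltas. Your only deviation is to feed these distinguished bases into the construction of Theorem~\ref{th:universal x generico} rather than repeat that construction inline, which is a legitimate economy.

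Part (2) is where there is a genuine gap. Your argument rests on the claim that an arbitrary $k$-linear isomorphism $i$ preserves $\Rr$ and its powers because these are ``intrinsically the ideals generated by non-units''. That identification is nowhere proved, and it is not formal: for the ideal generated by non-invertible morphisms to coincide with the positive-degree ideal one needs (i) that distinct vertices of $\Delta$ are non-isomorphic as objects of the mesh category, and (ii) that every positive-degree endomorphism in $k(\Delta)(x,x)$ is (quasi-)nilpotent --- otherwise $\operatorname{id}_x+r$ is a non-unit lying outside $\Rr(x,x)$, and the ideal generated by non-units is the whole category. Neither property is automatic here: $\pi\colon\Delta\to\Gamma$ is an arbitrary covering (the identity covering of a component with oriented cycles, e.g.\ a stable tube, is allowed), so $k(\Delta)$ can have positive-degree endomorphisms, and their nilpotence is exactly the kind of statement that would itself require an argument. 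The paper avoids this entirely: it reduces, via standard Auslander--Reiten theory, to showing that the classes $F\circ i(\overline{\alpha_j})+\rad^2$ form a basis of $\operatorname{irr}(\pi x,\pi y)$, observes that the covering property gives the right dimension count, and proves linear independence directly from the strong Riedtmann section property: if $\sum_j\lambda_j\,F\circ i(\overline{\alpha_j})\in\rad^2$ and $h$ is a radical morphism with $\overline{h}=i\left(\sum_j\lambda_j\overline{\alpha_j}\right)$, then $h-F(\overline{h})\in\rad^2$ forces $h\in\rad^2$, hence $i\left(\sum_j\lambda_j\overline{\alpha_j}\right)=0$, hence $\sum_j\lambda_j\overline{\alpha_j}=0$ by injectivity of $i$, and all $\lambda_j=0$. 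Note, finally, that even the paper's argument tacitly uses that $i$ carries degree-one elements to degree-one elements (otherwise the morphism $h$ above does not exist), i.e.\ that $i$ behaves like the graded isomorphisms supplied by Theorem~\ref{th:universal x generico}; you evidently noticed this issue and tried to remove it, but your replacement claim is precisely what is missing a proof. If you restrict to graded isomorphisms that are the identity on objects, your part (2) goes through and is then essentially the paper's argument in different clothing.
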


The difference between `strong' and 'weak' Riedtmann functors is therefore justified by the logic quantifiers used for stating the existence of the isomorphisms above: a strongly Riedtmann functor becomes a weak one by composing with any isomorphism, whereas a weakly Riedtmann functor only becomes strong when composed with a specific isomorphism.

\begin{proof}
    \begin{enumerate}

    \item The strategy is similar to the one used to prove item 2 of Theorem~\ref{th:universal x generico} above: we prove the existence of the isomorphism $\iota$ by proving that the path categories $k_g \Delta^v$ and $k_a \Delta$ are isomorphic and then observe that this isomorphism preserves mesh relations.

    Consider a pair of vertices $x,y \in \Delta_0$ linked by at least one arrow in $\Delta$, and suppose $\alpha_1,\cdots,\alpha_d: x \rightarrow y$ are all the arrows between $x$ and $y$. Since $F$ is a weakly Riedtmann functor, $\{\overline{F(\overline{\alpha_1})},\dots,\overline{F(\overline{\alpha_d})}\}$ forms a basis of $\operatorname{irr}(\pi x, \pi y) \doteq M(x,y)$. That determines an isomorphism of $k$-vector spaces $\iota: (k_g \Delta)_1(x,y) = M(x,y) \rightarrow (k_a \Delta)_1(x,y)$, given by: if $h: \pi x \rightarrow \pi y$ is irreducible and such that $h = \sum_{i=1}^d \lambda_i F(\overline{\alpha_i})$ modulo $\rad^2$, then $i(\overline{h}) = \sum_{i=1}^d \lambda_i \overline{\alpha_i}$. Note from this definition that for $1 \leq i \leq d$, $\iota(\overline{F(\overline{\alpha_i})}) = \overline{\alpha_i}$.

    Given that $k_g \Delta^v$ and $k_a \Delta$ are tensor categories, this shows how to build a $k$-linear isomorphism $k_g\Delta^v \cong k_a \Delta$. Let us see that it preserves the relations.

    Let $y \in \Delta_0$ be a non-projective vertex, and let $y_1,\cdots, y_r$ be the immediate predecessors of $y$. For $i$ between 1 and $r$, let $\alpha_{i1}, \dots, \alpha_{id_i}: y_i \rightarrow y$ be the arrows between $y_i$ and $y$. Then $(\overline{F(\overline{\alpha_{i1}})},\dots, \overline{F(\overline{\alpha_{id_i}})})$ is a basis of $\operatorname{irr}(\pi y_i, \pi y)$. The mesh that ends in $y$ has the form:

    \begin{displaymath}
        \xymatrix{ && y_1 \ar@<0.5ex>[ddrr]^{\alpha_{11}} \ar@<-0.5ex>[ddrr]_{\alpha_{1n_1}} && \\
        &&&& \\
        \tau y \ar@<0.5ex>[uurr]^{\sigma \alpha_{11}} \ar@<-0.5ex>[uurr]_{\sigma \alpha_{1n_1}} \ar@<0.5ex>[ddrr]^{\sigma \alpha_{r1}} \ar@<-0.5ex>[ddrr]_{\sigma \alpha_{rn_r}} && \vdots && y\\
        &&&& \\
        && y_r \ar@<0.5ex>[uurr]^{\alpha_{r1}} \ar@<-0.5ex>[uurr]_{\alpha_{rn_r}} && }
    \end{displaymath}

    \textit{Fact.} For every $1 \leq i \leq r$, $(\overline{F(\overline{\sigma \alpha_{i1}})},\dots, \overline{F(\overline{\sigma \alpha_{id_i}})})$ (which is a subset of $ \operatorname{irr}(\pi \tau y, \pi y_i)$)  is the dual basis of $(\overline{F(\overline{\alpha_{i1}})},\dots, \overline{F(\overline{\alpha_{id_i}})}) \subseteq \operatorname{irr}(\pi y_i, \pi y)$ relative to the $k$-bilinear form $\sigma_y: \operatorname{irr}(\pi y_i, \pi y) \otimes \operatorname{irr}(\pi \tau y, \pi y_i) \rightarrow k$.

    Note that, if the fact above is true, then we have

    \begin{align*}
        \iota \left( \sum_{i=1}^r \sum_{j=1}^{d_i} (\overline{F(\overline{\alpha_{ij}})})(\overline{F(\overline{\alpha_{ij}})})^*\right) &= \iota \left( \sum_{i=1}^r \sum_{j=1}^{d_i} \overline{F(\overline{\alpha_{ij}})}\overline{F(\overline{\sigma \alpha_{ij}})}\right) \\
        &= \sum_{i=1}^r \sum_{j=1}^{d_i} \iota(\overline{F(\overline{\alpha_{ij}})})\iota(\overline{F(\overline{\sigma \alpha_{ij}})}) = \sum_{i=1}^r \sum_{j=1}^{d_i}  \overline{\alpha_{ij}}\overline{\sigma\alpha_{ij}} = 0
    \end{align*}

    Thus $\iota$ vanishes on the mesh relations of $k_g(\Delta^v)$, proving that $\iota$ induces a $k$-linear isomorphism $\iota: k_g(\Delta^v) \xrightarrow{\sim} k_a(\Delta)$.

    \textit{Proof of the fact.}

    Since $F: k_A(\Delta) \rightarrow \ind \Gamma$ is a weakly Riedtmann functor, we have that

    $$0 \rightarrow \pi \tau y \xrightarrow{(F(\overline{\sigma \alpha_{lm}}))_{l,m}} \bigoplus_{l=1}^r \bigoplus_{m = 1}^{d_l} \pi y_l \xrightarrow{(F(\overline{\alpha_{lm}}))_{l,m}} \pi y \rightarrow 0$$

    is an almost split sequence. Fix $1 \leq i \leq r$. If $1 \leq j,j' \leq d_i$, then we get that the following diagram commutes:

    \begin{displaymath}
        \xymatrix{&&& \pi y_i \ar[d]^{F(\overline{\alpha_{ij}})} \ar[dl]_{u}& \\
        0 \ar[r]& \pi \tau y \ar[d]_{F(\overline{\sigma \alpha_{ij'}})} \ar[r]& \bigoplus_{l=1}^r \bigoplus_{m = 1}^{d_l} \pi y_l \ar[dl]^p \ar[r]& \pi y \ar[r]& 0\\
        & \pi y_i &&&}
    \end{displaymath}

    where $u: \pi y_i \rightarrow \bigoplus_{l=1}^r \bigoplus_{m = 1}^{d_l} \pi y_l$ is the canonical inclusion of the $j$-th copy of $\pi y_i$ in the direct sum $\bigoplus_{l=1}^r \bigoplus_{m = 1}^{d_l} \pi y_l$, and $p: \bigoplus_{l=1}^r \bigoplus_{m = 1}^{d_l} \pi y_l \rightarrow \pi y_i$ is the projection of the $j'$-th copy. 

    By definition of $\sigma_y$, we have $\sigma_y(\overline{F(\overline{\alpha_{ij}})} \otimes \overline{F(\overline{\sigma \alpha_{ij'}})} = \overline{pu} = \delta_{jj'} \in k \cong \operatorname{End} (\pi y_i) / \rad \operatorname{End} (\pi y_i)$, where $\delta_{jj'}$ is Kronecker's delta. That concludes the proof of the fact.

    Having proved the fact, it remains to prove that the functor $F \circ \iota: k_g(\Delta^v) \rightarrow \ind \Gamma$ is strongly Riedtmann. Let $h: \pi x \rightarrow \pi y$ be an irreducible morphism, where $x,y$ are vertices of $\Delta$. We want to show that $h - F\circ \iota (\overline{h}) \in \rad^2$.

    Let $\alpha_1,\dots,\alpha_d:x \rightarrow y$ be the arrows between $x$ and $y$. Then there are $\lambda_1,\dots,\lambda_d \in k$ such that $\iota(\overline{h}) = \lambda_1 \overline{\alpha_1} + \cdots + \lambda_d \overline{\alpha_d}$. We have

    \begin{align*}
        \iota(\overline{h}) &= \lambda_1 \overline{\alpha_1} + \cdots + \lambda_d \overline{\alpha_d}\\
        &= \lambda_1 \iota(\overline{F(\overline{\alpha_1})}) + \cdots + \lambda_d \iota(\overline{F(\overline{\alpha_d})}) = \iota \left(\lambda_1\overline{F(\overline{\alpha_1})}) + \cdots + \lambda_d\overline{F(\overline{\alpha_d})}\right)
    \end{align*}

    Using that $\iota$ is an isomorphism, this tells us that $\overline{h} = \lambda_1\overline{F(\overline{\alpha_1})} + \cdots + \lambda_d\overline{F(\overline{\alpha_d})} \break = \overline{F(\lambda_1\overline{\alpha_1} + \cdots + \lambda_d\overline{\alpha_d})} = \overline{F(\iota(\overline{h}))}$, that is, $h - F\circ \iota (\overline{h}) \in \rad^2$, concluding the proof of this item. 
    
    \item To show that $F \circ i$ meets the definition of weakly Riedtmann functor, we use basic Auslander-Reiten theory (for example, we could invoke Proposition IV.1.2 from \cite{AC}), and so it suffices to show the following fact:

    \textit{Fact.} If $\alpha_1,\dots,\alpha_d$ are all the arrows between two given vertices $x,y \in \Delta_0$, then the set $\{F \circ i(\overline{\alpha_1}) + \rad^2, \dots, F \circ i(\overline{\alpha_d})+ \rad^2\}$ forms a $k$-basis of $\operatorname{irr} (\pi x, \pi y)$.

    Let us prove the fact above. Since there are $d$ arrows $x \rightarrow y$ in $\Delta$ and $\pi: \Delta \rightarrow \Gamma$ is a covering, there is a total of $d$ arrows $\pi x \rightarrow \pi y$ in $\Gamma$. Thus $d = \dim_k \operatorname{irr} (\pi x, \pi y)$. Therefore to show that $\{F \circ i(\overline{\alpha_1}) + \rad^2, \cdots, F \circ i(\overline{\alpha_d})+ \rad^2\}$ is a $k$-basis of $\operatorname{irr} (\pi x, \pi y)$, it suffices to show that it is $k$-linearly independent.

    Let then $\lambda_1,\dots,\lambda_d \in k$ be scalars such that $\lambda_1 F \circ i(\overline{\alpha_1}) + \cdots +\lambda_d F \circ i(\overline{\alpha_d}) \in \rad^2(\pi x, \pi y)$. We have that, if $h \in \rad_A(\pi x, \pi y)$ is such that $\overline{h} = i(\lambda_1 \overline{\alpha_1} + \cdots + \lambda_d \overline{\alpha_d}) \in \operatorname{irr} (\pi x, \pi y)$, then $F(\overline{h}) = F \circ i (\lambda_1 \overline{\alpha_1} + \cdots + \lambda_d \overline{\alpha_d}) = \lambda_1 F \circ i(\overline{\alpha_1}) + \cdots \lambda_d F \circ i(\overline{\alpha_d}) \in \rad^2$.

    Since $F$ is a strongly Riedtmann functor, we know that $h - F(\overline{h}) \in \rad^2$, and since $F(\overline{h}) \in \rad^2$ it follows that $h \in \rad^2(\pi x, \pi y)$, that is, $\overline{h} = 0$. By the definition of $h$, it holds that $i(\lambda_1 \overline{\alpha_1} + \cdots + \lambda_d \overline{\alpha_d}) = 0$. Since $i$ is an isomorphism, we get $\lambda_1 \overline{\alpha_1} + \cdots + \lambda_d \overline{\alpha_d} = 0$. But the elements $\overline{\alpha_1}, \dots, \overline{\alpha_d}$ freely generate the morphisms of degree 1 between $x$ and $y$, and from that we may conclude that $\lambda_1 = \cdots = \lambda_d = 0$, which ends the proof.
    \end{enumerate}
\end{proof}

\subsection{The existence of Riedtmann functors}

So far we have only dealt with the definition of Riedtmann functors, and now it is time to talk about their existence. This has been asserted a couple of times in the literature, with a series of results that get increasingly strong.

Generally speaking, we cannot expect to have a Riedtmann functor $k(\Gamma) \rightarrow \ind \Gamma$ for every component $\Gamma$ of an AR-quiver, since oriented cycles in $\Gamma$ usually make this existence more unlikely. We will address components for which Riedtmann functors $k(\Gamma) \rightarrow \ind \Gamma$ exist in a forthcoming paper (\cite{CC6}).

However, once we consider a covering $\pi:\Delta \rightarrow \Gamma$, the existence of a Riedtmann functor $k(\Delta) \rightarrow \ind \Gamma$ becomes more likely, and if $\Delta$ satisfies additional properties, then this existence can be assured. So the basic result about existence of Riedtmann functors is the following:

\begin{teo}
\label{th:existe funtor de Ri}
    Let $\Gamma$ be a component of the AR-quiver of an algebra $A$. Then there is a covering $\pi:\Delta \rightarrow \Gamma$ for which there is a Riedtmann functor $F: k(\Delta) \rightarrow \ind \Gamma$.
\end{teo}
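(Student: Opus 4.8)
The plan is to produce, for a given component $\Gamma$, a covering $\pi: \Delta \to \Gamma$ together with a Riedtmann functor $F: k(\Delta) \to \ind \Gamma$. The natural candidate for $\Delta$ is the universal covering $\widetilde{\Gamma^v}$ (equivalently, by Theorem~\ref{th:universal x generico}, the generic covering $\hat\Gamma$), since the Remark after the definition of weakly Riedtmann functors already signals that restricting to this covering is precisely what makes existence provable: the homotopy relations defining $\widetilde{\Gamma^v}$ are engineered to kill exactly the obstructions coming from oriented cycles. I would work with the generic covering $\hat\Gamma$ and aim to build a \emph{strongly} Riedtmann functor $F: k_g(\widetilde{\Gamma^v}) \to \ind \Gamma$; by Theorem~\ref{th:universal x generico 2}(2), composing with the isomorphism of Theorem~\ref{th:universal x generico} then yields a weakly Riedtmann functor on $k_a(\hat\Gamma)$, so either formulation suffices.

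The construction of $F$ proceeds vertex-by-vertex and arrow-by-arrow, using the structural tools assembled just before the theorem. First, on objects I set $F\overline{w} = e(w) = \pi(\overline w)$, forced by condition (1). Second, for each vertex I must choose a section $\Bbbk_x \subseteq \operatorname{End}_A(\pi x)$ of the division algebra $k_{\pi x}$; this is exactly what the Wedderburn--Malcev Theorem (Theorem~\ref{th:wed-malcev}) provides, and it pins down $F$ on the degree-zero part. Third, for each arrow I must choose a $(\Bbbk_x - \Bbbk_y)$-linear section $\iota: \operatorname{irr}(\pi x, \pi y) \to \rad_A(\pi x, \pi y)$ of the canonical projection; the existence of such a section is guaranteed by the semisimplicity of $\Bbbk_x^{\mathrm{op}} \otimes_k \Bbbk_y$, which is Theorem~\ref{th:prod tensorial eh semissimples}. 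These three choices define a $k$-linear functor on the \emph{path} category $k\widetilde{\Gamma^v}$, since the path category is the free tensor category $T_S(M)$ and a functor out of it is determined freely by its values on generators. The only remaining step, and the crux of the proof, is to verify that this functor descends to the \emph{mesh} category, i.e.\ that $F$ sends each mesh relation $\gamma_x$ to zero in $\rad_A$.

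The verification that mesh relations map to zero is where I expect the genuine difficulty to lie, and it is precisely here that the passage to a covering (rather than $\Gamma$ itself) earns its keep. For a non-projective vertex $x$ of $\Delta$ lying over a non-projective $\pi x$, the mesh ending at $x$ maps under $\pi$ to the mesh ending at $\pi x$, which corresponds to a genuine almost split sequence $0 \to \tau(\pi x) \to E \to \pi x \to 0$ in $\ind \Gamma$. The image $F(\gamma_x) = \sum_{y} \sum_i \iota(u_i^*)\,\iota(u_i)$ is a radical endomorphism factoring through the middle term $E$, and I must show it vanishes. The strategy is to exploit that the chosen sections $\iota$ on the arrows into $x$ and on the arrows out of $\tau x$ can be taken compatibly with the non-degenerate bilinear form $\sigma_*$ defining the modulation: concretely, one arranges the dual-basis relation $\sigma_*(u_i^* \otimes u_j) = \delta_{ij}$ to be realized at the level of actual morphisms, so that $F(\gamma_x)$ is read off from the composite through $E$ and coincides with the map induced by the identity on $E$ modulo the radical square, hence is radical and in fact zero by minimality of the almost split sequence. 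This is the analogue, in the present generality, of the dual-basis computation carried out in the proof of Theorem~\ref{th:universal x generico 2}(1), and the role of the universal/generic covering is that, because $\widetilde{\Gamma^v}$ is built from homotopy classes, the sections made locally at each mesh can be propagated consistently along all walks without running into a monodromy obstruction; on $\Gamma$ itself an oriented cycle could force incompatible choices, which is exactly why the bare statement ``$k(\Gamma) \to \ind \Gamma$'' fails in general and is deferred to \cite{CC6}. Once $F(\gamma_x)=0$ is established for every non-projective $x$, the functor factors through $I$ and yields the desired $F: k(\Delta) \to \ind \Gamma$.
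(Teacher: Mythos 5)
Your proposal cannot be checked against an argument in the paper, because the paper gives no proof of Theorem~\ref{th:existe funtor de Ri}: it is a survey statement, and the text that follows it only attributes the result, in increasing generality, to Riedtmann, to Bongartz--Gabriel, and to Chaio--Le Meur--Trepode (\cite{CMT1,CMT2}). Your general framework does match the one used in those sources (take $\Delta$ the universal or generic covering; define $F$ on objects via $\pi$; use Wedderburn--Malcev, Theorem~\ref{th:wed-malcev}, to fix the sections $\Bbbk_x$; use semisimplicity of $\Bbbk_x^{op}\otimes_k\Bbbk_y$, Theorem~\ref{th:prod tensorial eh semissimples}, to obtain bimodule sections on the $\operatorname{irr}$-spaces; extend freely over the path category $T_S(M)$ and then try to descend to the mesh category). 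But the step you yourself identify as the crux --- showing $F(\gamma_x)=0$ for every non-projective $x$ --- is not proved; it is asserted, and the one concrete argument you offer for it is wrong.

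Concretely, two things are missing. First, if the sections on the arrows are chosen independently, then $F(\gamma_x)=\sum_{y,i}\iota(u_i)\iota(u_i^*)$ is only an element of $\rad^2(\tau\pi x,\pi x)$, and $\rad^2(\tau\pi x,\pi x)$ is in general nonzero; your conclusion ``radical and in fact zero by minimality of the almost split sequence'' is a non sequitur --- neither radicality nor minimality forces a morphism to vanish. The composite vanishes only when the chosen representatives of the arrows ending at $x$ and of the arrows starting at $\tau x$ are \emph{literally the components of one exact almost split sequence} $0\to\tau\pi x\to E\to\pi x\to 0$, in which case vanishing is just exactness ($gf=0$), not minimality. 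Second, and this is the real content of the theorem, such choices must be made \emph{globally consistently}: an arrow $\alpha:y\to x$ of $\Delta$ with $y$ non-injective and $x$ non-projective lies in two meshes (the one ending at $x$, and the one ending at $\tau^{-1}y$, where it plays the role of $\sigma\beta$), so one cannot normalize mesh-by-mesh independently. In the cited literature this is resolved by a well-founded induction over the covering, using precisely the structural properties of $\widetilde{\Gamma^v}$ (resp.\ $\hat\Gamma$) --- absence of oriented cycles and interval-finiteness --- to order the meshes so that, when the mesh at $x$ is treated, the already-fixed maps out of $\tau x$ form a source morphism which one completes to an almost split sequence. Your phrase that the sections ``can be propagated consistently along all walks without running into a monodromy obstruction'' is exactly the statement that this induction can be carried out; asserting it is asserting the theorem, not proving it.
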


The previous theorem has been proved in the literature under the following conditions:

\begin{itemize}
    \item It was proved in the case where $k$ algebraically closed, $\Gamma$ is finite and stable, $A$ is self-injective, and with $\Delta$ having the form $\mathbb{Z}B$, where $B$ is a tree (Riedtmann, '80).

    \item It was proved for $k$ algebraically closed, $\Gamma$ being finite, and with $\Delta$ being the \textit{universal covering} of $\Ga$ (Bongartz, Gabriel, '82).

    \item Then it was proved for every $\Ga$ (not necessarily finite), with $k$ algebraically closed, but $\Delta$ being the \textit{generic covering} (Chaio, Le Meur, Trepode, '11). In this case and in the ones above, only the existence of weakly Riedtmann functors is dealt with.

    \item Finally, it was proved for every $\Ga$, $k$ being \textit{perfect}, and $\Delta$ being the \textit{universal covering} (Chaio, Le Meur, Trepode, '19). It is in this article that what we call strongly Riedtmann functors are defined and have their existence shown.
\end{itemize}

\subsection{A key property of Riedtmann functors}
\label{sec:th b}

We now state a key property of Riedtmann functors, that relates the filtrations of the radical of the mesh category with the ones of the module category.

Originally, this property was stated in \cite{BrG,CT} for standard components (i.e., components for which there is an isomorphism $k(\Gamma) \cong \ind \Gamma$), and then extended for any weakly Riedtmann functor in \cite{CMT1}. Eventually it was proved for strongly Riedtmann functors in \cite{CMT2}:

\begin{teo}[\cite{CMT2}, Thm. B]
\label{th:b19}
Suppose $k$ is perfect. Let $\Gamma$ be a component of the AR-quiver of a $k$-algebra $A$, and let $\pi: \Delta \rightarrow \Gamma$ be a covering, for which there is a strongly Riedtmann functor $F:k(\Delta) \rightarrow \ind \Gamma$, where $\Delta$ is equipped with the modulation induced from the standard modulation in $\Gamma$. Then for every $n \geq 1$ and every pair of vertices $x,y \in \Delta_0$, the functor $F$ induces bijections

$$\bigoplus_{Fz = Fy} \frac{\mathcal{R}^nk(\Delta)(x,z)}{\mathcal{R}^{n+1}k(\Delta)(x,z)}\xrightarrow{\sim} \frac{\rad^n(Fx,Fy)}{\rad^{n+1}(Fx,Fy)}$$

and

$$\bigoplus_{Fz = Fx} \frac{\mathcal{R}^nk(\Delta)(z,y)}{\mathcal{R}^{n+1}k(\Delta)(z,y)}\xrightarrow{\sim} \frac{\rad^n(Fx,Fy)}{\rad^{n+1}(Fx,Fy)}$$

\end{teo}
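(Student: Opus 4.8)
The plan is to prove the first bijection (the second being dual), by exploiting the $\mathbb{N}$-grading of the mesh category together with Theorem~\ref{th:b19}'s hypothesis that $F$ is strongly Riedtmann. The crucial structural input is that the covering $\pi$ restricts to a bijection between the fiber $\pi^{-1}(Fy)$ (equivalently, the set of vertices $z$ with $Fz = Fy$) and certain paths in $\Gamma$; combined with the local-isomorphism property of $\pi$ on arrows, this will let me match graded pieces of $\mathcal{R}^n k(\Delta)$ against homogeneous components of $\rad^n$ modulo $\rad^{n+1}$. First I would observe that, by definition, $\mathcal{R}^n k(\Delta)(x,z)/\mathcal{R}^{n+1} k(\Delta)(x,z) = k(\Delta)_n(x,z)$, the space of degree-$n$ morphisms in the mesh category, so the left-hand side is $\bigoplus_{Fz=Fy} k(\Delta)_n(x,z)$.

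Next I would set up the natural map. A degree-$n$ element of $k(\Delta)(x,z)$ is a linear combination of (classes of) paths $x \to z$ of length $n$ in $\Delta$; applying $F$ sends such a path to a composite of $n$ irreducible morphisms in $\md A$, which by definition lies in $\rad^n(Fx, Fz) = \rad^n(Fx, Fy)$. Passing to the quotient by $\rad^{n+1}$ gives a well-defined $k$-linear map on each summand, and summing over the fiber yields the candidate map $\Phi$ in the statement. The heart of the matter is then \textbf{surjectivity and injectivity simultaneously}, for which I expect to argue by induction on $n$. The base case $n=1$ is essentially condition (3) in the definition of a strongly Riedtmann functor (sections of $\operatorname{irr}$) together with the fact that a covering induces bijections on arrows, so that the arrows of $\Delta$ ending at the various $z$ with $Fz = Fy$ biject with the arrows of $\Gamma$ ending at $Fy$, which index a basis of $\operatorname{irr}(\cdot, Fy) = \rad/\rad^2$.

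For the inductive step I would use the recursive description $(k\Gamma)_n(x,z) = \bigoplus_{w \in z^-}(k\Gamma)_{n-1}(x,w) \otimes M(w,z)$ descending to the mesh category, and pair it on the module side with the standard fact that $\rad^n/\rad^{n+1}$ is generated by composites of a $\rad^{n-1}$-morphism with an irreducible one. The covering property guarantees that immediate predecessors in $\Delta$ sit over immediate predecessors in $\Gamma$ bijectively, so the decomposition on the mesh side matches the module side factor-by-factor. The part requiring genuine care — and which I expect to be the \textbf{main obstacle} — is controlling the mesh relations: a composite that is nonzero as a path can vanish in the mesh category, and conversely one must ensure that relations on the module side (i.e.\ coincidences making a composite fall into $\rad^{n+1}$) are \emph{exactly} accounted for by the mesh relations $\gamma_x$. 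This is precisely where the strong condition is needed rather than merely having $F(\overline{\alpha})$ irreducible: the $(\Bbbk_x - \Bbbk_y)$-linear sections force $F$ to respect the bilinear forms $\sigma_*$, hence to carry the mesh relation $\gamma_z$ into $\rad^{n+1}$ compatibly, so that the almost-split structure of the meshes on the module side is reproduced faithfully. I would therefore isolate a lemma stating that $F$ sends $\mathcal{R}^{n+1}k(\Delta)$ into $\rad^{n+1}$ and that the induced map is a kernel-and-cokernel isomorphism, and close the induction by a dimension count on the indecomposable-module side, using that each $\operatorname{irr}$ space is finite-dimensional and that $\pi$ is a local bijection on arrows.
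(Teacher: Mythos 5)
First, a point of comparison: the paper does not prove Theorem~\ref{th:b19} at all; it quotes it from \cite{CMT2} (Theorem B there), so your attempt can only be judged on its own merits, not against an internal proof. On those merits, your skeleton is sound as far as it goes: the identification $\mathcal{R}^nk(\Delta)(x,z)/\mathcal{R}^{n+1}k(\Delta)(x,z)=k(\Delta)_n(x,z)$ is correct by the grading of Remark~\ref{obs:categoria graduada}; the candidate map is well defined because $F$ sends degree-one pieces into $\rad$, hence $\mathcal{R}^{n+1}$ into $\rad^{n+1}$; the base case $n=1$ does follow from condition (3) of the strongly Riedtmann definition together with the covering property (although over a perfect field the correct formulation is the bimodule section of $\rad\twoheadrightarrow\operatorname{irr}$, not ``arrows index a basis,'' which is the algebraically closed simplification); and surjectivity in the inductive step can be run using the standard factorization of $\rad^n$-morphisms through sink maps. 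You also locate the crux correctly: the mesh relations must account \emph{exactly} for the coincidences that push composites of irreducibles into $\rad^{n+1}$.

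But that crux is precisely what your proposal does not close, and the move you offer to close it would fail. A dimension count is circular here: the equality of $\sum_{Fz=Fy}\dim_k k(\Delta)_n(x,z)$ with $\dim_k \rad^n(Fx,Fy)/\rad^{n+1}(Fx,Fy)$ is (given surjectivity) equivalent to the theorem, and you give no independent way to compute the right-hand side; none can be elementary, since this equality immediately implies the Igusa--Todorov Theorem~\ref{th:igusa todorov} (see~\ref{subsec:new proof IT}). Likewise, your claim that the $(\Bbbk_x-\Bbbk_y)$-linear sections ``force $F$ to respect the bilinear forms $\sigma_*$'' is an assertion, not an argument: a strongly Riedtmann functor is defined by linear sections, and the statement that the morphisms $F(\overline{\alpha})$, for $\alpha$ ranging over the arrows of $\Delta$ ending at the vertices of the fiber of $Fy$, assemble up to $\rad^2$-corrections into a right minimal almost split morphism is itself a nontrivial lemma (compare Theorem~\ref{th:universal x generico 2}, which the paper proves only for $k$ algebraically closed). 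What injectivity actually needs is a filtered-exactness lemma for almost split sequences: if $\alpha_i\colon w_i\to z_i$ are those arrows, $u_i\in\rad^{n-1}(Fx,Fw_i)$, and $\sum_i F(\overline{\alpha_i})\,u_i\in\rad^{n+1}(Fx,Fy)$, then there exists $v\in\rad^{n-2}(Fx,\tau Fy)$ with each $u_i$ congruent modulo $\rad^n$ to the composite of $v$ with the corresponding component of the source map of the almost split sequence ending at $Fy$; feeding this into the inductive hypothesis at the $\tau$-translated vertices rewrites the offending mesh-category element as a multiple of the mesh relation, hence zero in $k(\Delta)_n$, and the same lemma is what yields the independence of the summands indexed by the fiber $\{z:Fz=Fy\}$. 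That lemma is the technical heart of \cite{CMT1,CMT2}, and nothing in your proposal supplies it or a substitute for it.
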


\section{Applications to compositions of irreducible morphisms}
\label{sec:appl to composites}

\subsection{A criterion for general composites of irreducible morphisms}
\label{subsec:inf ou atalhos}

Having defined mesh categories and Riedtmann functors in the previous sections, we now want to state our main application here, which uses these concepts to the problem of composites of irreducible morphisms.

First we introduce some terminology that will be useful in the statement of the result.

\begin{defi} 
    \begin{enumerate}
        \item Let  $\Gamma$ be a component  of an AR-quiver. We say that a path $X_0 \xrightarrow{\alpha_1} X_1 \xrightarrow{\alpha_2} \cdots \xrightarrow{\alpha_n} X_n$  in $\Gamma$ is a \textbf{shortcut} for a path  $X_0 = Y_0 \xrightarrow{\beta_1} Y_1 \xrightarrow{\beta_2} \cdots \xrightarrow{\beta_m} Y_m = X_n$ in $\Gamma$ if $n<m$. 
        
        \item We say that a composite of irreducible morphisms $X_0 \xrightarrow{f_1} X_1 \xrightarrow{f_2} \cdots \xrightarrow{f_n} X_n$ between indecomposable modules is a \textbf{shortcut} for another composite of irreducible morphisms $X_0 = Y_0 \xrightarrow{g_1} Y_1 \xrightarrow{g_2} \cdots \xrightarrow{g_m} Y_m = X_n$ between indecomposable modules if $n<m$.
    \end{enumerate}
\end{defi}

\begin{obs}
    If $\alpha: i \rightarrow j$ is an arrow of $\Ga(\md A)$, then $\alpha$ is a shortcut of a path if and only if $\alpha$ has a \textit{bypass} (see \cite{CHR} or \cite{CT} for the definition) or there is an oriented cycle passing through $i$ or $j$.
\end{obs}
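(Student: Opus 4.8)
The plan is to unwind the two definitions and reduce the statement to a purely combinatorial dichotomy about paths in $\Ga(\md A)$. By definition, the single arrow $i \xrightarrow{\alpha} j$, viewed as a path of length $1$, is a shortcut of a path precisely when there exists a path $i = Y_0 \to Y_1 \to \cdots \to Y_m = j$ in $\Ga(\md A)$ with $m \geq 2$. So the entire claim reduces to deciding when a path of length at least $2$ from $i$ to $j$ exists, and matching this condition against the two alternatives (having a bypass, or lying on an oriented cycle through $i$ or $j$). Here I take the definition of a \emph{bypass} of $\alpha$ from \cite{CHR,CT} to be a path of length at least $2$ from $i$ to $j$ not using the arrow $\alpha$ itself.

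For the forward direction I would fix a path $p\colon i = Y_0 \to \cdots \to Y_m = j$ with $m \geq 2$ and split into two cases according to whether $\alpha$ occurs among the arrows of $p$. If $\alpha$ does not occur in $p$, then $p$ is by definition a bypass of $\alpha$ and we are done. If $\alpha$ does occur, say as $Y_{k-1} \xrightarrow{\alpha} Y_k$ with $Y_{k-1} = i$ and $Y_k = j$, then since $m \geq 2$ we cannot have both $k = 1$ and $k = m$; hence either $k \geq 2$, which yields a nontrivial subpath $i = Y_0 \to \cdots \to Y_{k-1} = i$, i.e. an oriented cycle through $i$, or $k \leq m-1$, which yields a nontrivial subpath $Y_k = j \to \cdots \to Y_m = j$, i.e. an oriented cycle through $j$.

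For the converse I would treat the two sufficient conditions separately. If $\alpha$ has a bypass, that bypass is itself a path of length at least $2$ from $i$ to $j$, so $\alpha$ is a shortcut of it. If instead there is an oriented cycle through $i$, concatenating it with $\alpha$ produces a path $i \to \cdots \to i \xrightarrow{\alpha} j$ of length at least $2$; symmetrically, a cycle through $j$ gives $i \xrightarrow{\alpha} j \to \cdots \to j$. In each case $\alpha$ is a shortcut, so the equivalence follows. (Since $\Ga(\md A)$ is a translation quiver it has no loops, so every oriented cycle has length at least $2$, though length at least $1$ would already suffice for the argument.)

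The only genuinely delicate point is the case in the forward direction where the long path reuses the arrow $\alpha$: one must observe that any occurrence of $\alpha$ inside a path of length at least $2$ forces either a nontrivial return to $i$ before it or a nontrivial departure-and-return to $j$ after it, which is exactly what produces the required oriented cycle. Once the definition of bypass is pinned down as above, the remaining steps are elementary, and I would expect the whole argument to occupy only a few lines.
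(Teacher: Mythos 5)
The paper states this remark without proof, so there is no argument of the authors to compare against; judged on its own merits, your proof is correct and complete. Your reduction is exactly what the paper's definition of shortcut gives ($\alpha$ is a shortcut of a path if and only if some path of length at least two joins $i$ to $j$), and the case split — either the long path avoids $\alpha$ and is therefore a bypass, or it reuses $\alpha$ and then the segment before or after that occurrence is a nontrivial oriented cycle through $i$ or $j$ — together with the easy converse settles the equivalence. The one point that deserves emphasis is that the whole statement hinges on the definition of bypass you fixed, namely a path of length at least two from $i$ to $j$ not containing $\alpha$: this is the reading under which the theorem of \cite{CHR} (that every bypass is sectional) can hold — e.g.\ in a stable tube the only long paths from a quasi-simple $X$ to $X[2]$ begin with the arrow itself and are non-sectional — and it is also the reading under which the remark's cycle clause is not redundant, so your choice matches the intended one. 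Note also that under minor variants of the definition (a bypass required to avoid all arrows parallel to $\alpha$, or to avoid revisiting $i$ and $j$), your first case would need the same cycle-extraction argument you already use in the second case, so the proof is robust to how the definition is pinned down.
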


We are ready for the statement of the result. It is a characterization of when the non-zero composition of $n$ irreducible morphisms belongs to the $n+1$-power of the radical.

\begin{teo}
\label{th:inf ou atalhos}

Let $A$ be a finite dimensional algebra over a perfect field $k$ and $\Ga$ be a component of $\Ga(\md A)$. 
Given indecomposable modules $X_0,X_1,\cdots,X_n$ in $\Ga$, the following are equivalent:

\begin{enumerate}
    \item There is a path of irreducible morphisms $X_0 \xrightarrow{h_1} X_1 \xrightarrow{h_2} \cdots \xrightarrow{h_n} X_n$ such that $h_n \cdots h_1$ is non-zero and belongs to $\rad^{n+1}(X_0,X_n)$.

    \item There is a path of irreducible morphisms $X_0 \xrightarrow{f_1} X_1 \xrightarrow{f_2} \cdots \xrightarrow{f_n} X_n$ with $f_n \cdots f_1 =0$ and there are morphisms $X_0 \xrightarrow{\epsilon_1} X_1 \xrightarrow{\epsilon_2} \cdots \xrightarrow{\epsilon_n} X_n$ such that $\epsilon_n \cdots \epsilon_1 \neq 0$ and satisfying that, for every $1 \leq i \leq n$, either $\epsilon_i \in \rad^2(X_{i-1},X_i)$ or $\epsilon_i = f_i$.

    \item There is a path of irreducible morphisms $X_0 \xrightarrow{f_1} X_1 \xrightarrow{f_2} \cdots \xrightarrow{f_n} X_n$ with $f_n \cdots f_1 =0$, and such that one of the following holds:
    \begin{enumerate}
        
        \item  There is a path of irreducible morphisms $X_0 \xrightarrow{h_1} X_1 \xrightarrow{h_2} \cdots \xrightarrow{h_n} X_n$ such that $h_n \cdots h_1 \in \rad^{\infty}(X_0,X_n)\setminus\{0\}$.
    
        \item There are indices $i_1 < \cdots < i_l$ such that for every $1 \leq j \leq l$, $f_{i_j}$ is a shortcut for some composition of irreducible morphisms $\phi_{i_j}$, and such that

    $$f_n \cdots f_{i_l+1} \phi_{i_l} f_{i_l-1} \cdots f_{i_1+1} \phi_{i_1} f_{i_1-1} \cdots f_1 \neq 0.$$
    \end{enumerate}
\end{enumerate}
\end{teo}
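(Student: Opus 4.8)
The plan is to transport the question into the mesh category of a covering, where the radical filtration is replaced by the $\mathbb{N}$-grading and so becomes transparent. I would fix a covering $\pi:\Delta\to\Ga$ and a strongly Riedtmann functor $F:k(\Delta)\to\ind\Ga$, whose existence is ensured by Theorem~\ref{th:existe funtor de Ri}, and then exploit the graded bijections of Theorem~\ref{th:b19} throughout. The governing principle is that a path of length $m$ in $\Delta$ is a homogeneous element of degree $m$ of $k(\Delta)$, so it either survives as a nonzero element of $\Rr^m k(\Delta)/\Rr^{m+1}k(\Delta)$ or is annihilated by the mesh relations; there is no intermediate behaviour. Fixing a lift $x_0$ of $X_0$ and lifting the path $X_0\to\cdots\to X_n$ arrow by arrow to $\Delta$, one obtains from this, together with Theorem~\ref{th:b19} and the fact that the class of a composite in $\rad^n/\rad^{n+1}$ depends only on the residues of its factors in $\operatorname{irr}$, the basic equivalence: for irreducible $h_1,\dots,h_n$, the composite $h_n\cdots h_1$ lies in $\rad^{n+1}$ if and only if the corresponding lifted path vanishes in $k(\Delta)$.

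I would prove $(1)\Leftrightarrow(2)$ first, as it is purely formal once the above is in place. For $(1)\Rightarrow(2)$: since $h_n\cdots h_1\in\rad^{n+1}$, the lifted degree-$n$ mesh path is zero, so setting $f_i:=F(\mu_i)$ where $\mu_i$ is the lift of the residue $\overline{h_i}$ produces irreducible morphisms with $\overline{f_i}=\overline{h_i}$ and $f_n\cdots f_1=F(\mu_n\cdots\mu_1)=F(0)=0$. Writing $h_i=f_i+d_i$ with $d_i\in\rad^2$ and expanding the product, the term indexed by the empty set is $f_n\cdots f_1=0$, so the nonzero morphism $h_n\cdots h_1$ equals a sum of composites each of which has some factors equal to $f_i$ and the remaining factors in $\rad^2$; at least one summand is nonzero, and it is exactly an $\epsilon$-configuration. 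For $(2)\Rightarrow(1)$ I would, among all valid configurations with the given $f_i$, choose one whose correction set $S=\{i:\epsilon_i\in\rad^2\}$ is of minimal size (necessarily $|S|\ge 1$), and put $h_i:=f_i+\epsilon_i$ for $i\in S$ and $h_i:=f_i$ otherwise, each of which is irreducible. Expanding $h_n\cdots h_1=\sum_{T\subseteq S}c_T$, the empty term vanishes, the full term is $\epsilon_n\cdots\epsilon_1\neq 0$, and every intermediate term is again a valid $\epsilon$-configuration with fewer corrections, hence zero by minimality; thus $h_n\cdots h_1=\epsilon_n\cdots\epsilon_1$ is nonzero and lies in $\rad^{n+|S|}\subseteq\rad^{n+1}$.

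To incorporate (3) I would close the cycle through $(1)\Rightarrow(3)\Rightarrow(2)$. The part ``$f_n\cdots f_1=0$'' of (3) is supplied by the $f_i$ constructed above. The disjunction then comes from a dichotomy on the radical depth of the nonzero composite: if $h_n\cdots h_1\in\rad^{\infty}$ we are in case (3a); otherwise $h_n\cdots h_1\in\rad^N\setminus\rad^{N+1}$ for some finite $N\ge n+1$, and Theorem~\ref{th:b19} identifies its nonzero class in $\rad^N/\rad^{N+1}$ with a nonzero degree-$N$ element of $k(\Delta)$, that is, with an honest path of length $N>n$ from $x_0$ to a lift of $X_n$. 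Recognising this longer path, up to the homotopy generated by the mesh relations, as the original path with certain single arrows $f_{i_j}$ expanded into longer subpaths $\phi_{i_j}$ then yields case (3b), since each $\phi_{i_j}$, being a composite of at least two irreducibles, lies in $\rad^2$ and admits $f_{i_j}$ as a shortcut. Conversely $(3)\Rightarrow(2)$ is immediate: (3a) is a special case of (1), hence of (2) by the previous paragraph, while in (3b) one sets $\epsilon_i=\phi_i$ on the shortcut indices and $\epsilon_i=f_i$ elsewhere, obtaining an $\epsilon$-configuration with $\epsilon_n\cdots\epsilon_1$ equal to the given nonzero spliced composite.

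The step I expect to be the main obstacle is the finite-depth half of $(1)\Rightarrow(3)$: passing from an \emph{arbitrary} nonzero degree-$N$ path realising the class in $\rad^N/\rad^{N+1}$ to one that is literally the original length-$n$ path with some of its arrows replaced by shortcut subpaths, all while keeping the composite nonzero. Everywhere else the argument is formal bookkeeping with the grading and with Theorem~\ref{th:b19}, but here one must genuinely use the combinatorics of the covering and the homotopy relation generated by the mesh relations to control which longer paths can occur and how they reduce onto the given one. Cleanly isolating this finite case from the $\rad^{\infty}$ alternative is precisely what forces condition (3) into its disjunctive shape.
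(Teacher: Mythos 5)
Your framework (covering, strongly Riedtmann functor, Theorem~\ref{th:b19} plus the grading), your direct proof of $(1)\Leftrightarrow(2)$, and your reduction $(3)\Rightarrow(2)$ are all correct; in fact your minimality-of-the-correction-set argument for $(2)\Rightarrow(1)$ is essentially the same device the paper uses (there deployed for $(3)\Rightarrow(1)$), and your expansion argument for $(1)\Rightarrow(2)$ is a reasonable self-contained substitute for the paper's citation of \cite{CMT2}. The genuine gap is exactly where you flag it, and the route you sketch for it would fail, not merely be hard. In the finite-depth case of $(1)\Rightarrow(3)$ you propose to take the nonzero class of $h_n\cdots h_1$ in $\rad^N/\rad^{N+1}$, identify it via Theorem~\ref{th:b19} with a nonzero degree-$N$ element of $k(\Delta)$, and then ``recognise'' that element as the original path with some arrows expanded into shortcuts. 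Two problems: a nonzero degree-$N$ element is a linear combination of classes of paths of length $N$, not a single path; and, fatally, the class of the composite in $\rad^N/\rad^{N+1}$ retains no memory of the factorization through $X_1,\dots,X_{n-1}$. The degree-$N$ paths representing that class need not pass through the lifts $x_1,\dots,x_{n-1}$ at all, so nothing forces them to have the form ``$f$'s with some $f_{i_j}$ replaced by $\phi_{i_j}$'', nor to keep the spliced composite nonzero; the mesh homotopy relation cannot recover a factorization of morphisms in $\ind\Gamma$ that was never encoded in the class to begin with.

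The missing idea is to apply Theorem~\ref{th:b19} to each arrow rather than to the composite. Using the theorem successively along the filtration $\rad^2\supseteq\rad^3\supseteq\cdots$, write each $h_i$ as $h_i=h_{i1}+h_{i2}+h_{i3}$, where $h_{i1}=F(\overline{h_i})$ is irreducible, $h_{i2}=\sum_z F(\phi_z)$ with $(\phi_z)_z\in\bigoplus_{Fz=Fx_i}\mathcal{R}^2k(\Delta)(x_{i-1},z)$, so that $h_{i2}$ is a finite sum of composites of at least two irreducible morphisms from $X_{i-1}$ to $X_i$ (each admitting $f_i:=h_{i1}$ as a shortcut), and $h_{i3}\in\rad^{\infty}(X_{i-1},X_i)$. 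Expanding $h_n\cdots h_1$ multilinearly into $3^n$ terms, the all-$1$ term equals $F(\overline{h_n}\cdots\overline{h_1})=0$ by your basic equivalence (this produces the $f_i$ with $f_n\cdots f_1=0$), and every other term lies in $\rad^{n+1}$. The dichotomy of (3) is then a dichotomy on this expansion, not on the depth $N$ of $h_n\cdots h_1$: either every nonzero term contains some factor $h_{i3}$, in which case $h_n\cdots h_1\in\rad^{\infty}$ and (3a) holds with the original path; or some nonzero term uses only factors of types $1$ and $2$, and expanding each occurring $h_{i2}$ into its constituent composites yields, by multilinearity, a nonzero spliced composite of exactly the shape required in (3b). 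This per-arrow decomposition keeps the factorization through the $X_i$ visible at every stage, which is precisely what your class-level argument discards.
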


\begin{proof}

First of all, using Theorem~\ref{th:existe funtor de Ri}, fix a strongly Riedtmann functor $F: k(\widetilde{\Ga^v}) \rightarrow \ind \Gamma$, where $\pi: \widetilde{\Ga^v} \rightarrow \Gamma^v$ is the universal covering.

$(1) \Leftrightarrow (2)$: is done in \cite{CMT2}, Proposition 3.

    $(1) \Rightarrow (3)$: Fix some $x_0 \in \pi^{-1}(X_0)$. Then, we can lift the path from the hypothesis to a path $\gamma: x_0 \rightarrow x_1 \rightarrow \cdots \rightarrow x_n$ over $\tilde{\Ga}$, whose image under $\pi$ is the given path $X_0 \rightarrow X_1 \rightarrow \cdots \rightarrow X_n$ over $\Ga$. Since each $h_i: X_{i-1} \rightarrow X_i$ is irreducible, we have that $\overline{h_i} \in \rad(X_{i-1},X_i)/\rad^2(X_{i-1},X_i)$, and also that $h_i - F(\overline{h_i}) \in \rad^2(X_{i-1},X_i)$, using that $F$ is a Riedtmann functor.

    Applying Theorem~\ref{th:b19} successively, we realize that for every $i$ there is an element $(\phi_z)_z \in \oplus_{Fz = Fx_i} \mathcal{R}^2 k(\tilde{\Ga})(x_i,z)$ such that $h_i - F(\overline{h_i}) - \sum_z F(\phi_z) \in \rad^{\infty}(X_{i-1},X_i)$. Then define $h_{i1} = F(\overline{h_i})$, $h_{i2} = \sum_z F(\phi_z)$ e $h_{i3} = h_i - h_{i1} - h_{i2} \in \rad^{\infty}$.

    Observe that

    \begin{align*}
        h_n \cdots h_1 &= (h_{n1} + h_{n2} + h_{n3})  \cdots (h_{11} + h_{12} + h_{13}) \\
        &= \sum_{j_1,\cdots,j_n = 1}^3 h_{nj_n} \cdots h_{2j_2}h_{1j_1}
    \end{align*}

    Note that, for those terms having any $j_i \geq 2$, it holds that $h_{ij_i} \in \rad^2$ and therefore $h_{nj_n} \cdots h_{1j_1} \in \rad^{n+1}(X_0,X_n)$. Thus, since by hypothesis $h_n \cdots h_1 \in \rad^{n+1}(X_0,X_n)$, we must have $h_{n1} \cdots h_{11} \in \rad^{n+1}(X_0,X_n)$. That is, $F(\overline{h_n}) \cdots F(\overline{h_1}) = F(\overline{h_n} \cdots \overline{h_1}) \in \rad^{n+1}(X_0,X_n)$. By Theorem~\ref{th:b19}, it follows that $\overline{h_n} \cdots \overline{h_1} \in \mathcal{R}^{n+1}k(\tilde{\Ga})(x_0,x_n)$, and using that $k(\tGa)$ is $\mathbb{N}$-graded, it follows that $\overline{h_n} \cdots \overline{h_1} = 0$. Thus $h_{n1} \cdots h_{11} = 0$ and it is sufficient to take $f_i = h_{i1}$ to conclude the first part of (3).

    For the second part, we have two cases:

    \begin{enumerate}
        \item[a)] Suppose that, for every choice of $j_1, \cdots, j_n$, if $h_{nj_n} \cdots h_{1j_1} \neq 0$ then there is an $i$ with $j_i = 3$.

        Fix $j_1, \cdots, j_n$ com $h_{nj_n} \cdots h_{1j_1} \neq 0$. Then there is an $i$ with $j_i = 3$ and since $h_{ij_i} = h_{i3} \in \rad^{\infty}$, we have $h_{nj_n} \cdots h_{1j_1} \in \rad^{\infty}$. So the hypothesis we have for this case implies that 

        $$h_n \cdots h_1 = \sum_{j_1,\cdots,j_n = 1}^3 h_{nj_n} \cdots h_{2j_2}h_{1j_1} \in \rad^{\infty}(X_0,X_n)$$
        
        \item[b)] Suppose that we have the opposite of a), that is, there are $j_1, \cdots, j_n$ such that $j_i < 3$ for every $i$ and with $h_{nj_n} \cdots h_{1j_1} \neq 0$.

        Denote $\{i_1,\cdots,i_l\} = \{i: j_i = 2\}$, where $i_1 < \cdots < i_l$. If $i$ is such that $j_i=1$, then $h_{ij_i} = f_i$, and therefore we have

        $$f_n \cdots f_{i_l+1} h_{i_l 2} f_{i_l-1} \cdots f_{i_1+1} h_{i_1 2} f_{i_1-1} \cdots f_1 \neq 0$$

        For every $i$ such that $j_i =2$, $h_{ij_i}$ is, by construction, a sum of compositions of at least two irreducible morphisms. That way, we can choose for each such $i$ one of these compositions, which we denote by $\phi_i$, in such a way that we have

        $$f_n \cdots f_{i_l+1} \phi_{i_l} f_{i_l-1} \cdots f_{i_1+1} \phi_{i_1} f_{i_1-1} \cdots f_1 \neq 0$$

        By construction, each $\phi_i$ has the morphism $f_i$ as a shortcut. This concludes the proof of $(1) \Rightarrow (3)$.
    \end{enumerate}

    $(3) \Rightarrow (1)$: Let $f_1, \cdots, f_n$ be as stated. If (a) holds, then it obvious that (1) also holds. So suppose that (b) holds. We shall construct $h_1,\cdots,h_n$ as in (1).

    Given a subsequence $j_1<\cdots< j_m$ of the sequence of indices $i_1 < \cdots < i_l$, we can consider the morphism

    $$g(j_1,\cdots,j_m) \doteq f_n \cdots f_{j_m+1} \phi_{j_m} f_{j_m-1} \cdots f_{j_1+1} \phi_{j_1} f_{j_1-1} \cdots f_1$$

    Let $m_0$ be the natural number which is the minimum length of a subsequence $j_1<\cdots< j_{m_0}$ for which $g(j_1,\cdots, j_{m_0}) \neq 0$. Since the hypothesis (b) says that $g(i_1,\cdots,i_l) \neq 0$, there must be one such $m_0$, and since $g(\emptyset) = f_n \cdots f_1 = 0$, we have that $m_0 > 0$.

    Now define

    $$h_i = \begin{cases}
        f_i & \text{if } i \notin \{j_1,\cdots,j_{m_0}\}\\
        f_i+\phi_i & \text{if } i \in \{j_1,\cdots,j_{m_0}\}
    \end{cases}$$

    Since for each $i$, $f_i$ is irreducible and $\phi_i \in \rad^2(X_{i-1},X_i)$, we have that $h_i$ is irreducible. Let us see that $h_n \cdots h_1 \in \rad^{n+1}(X_0,X_n) \setminus \{0\}$. Observe that

    \begin{align*}
        h_n \cdots h_1 &=  f_n \cdots f_{j_{m_0}+1} (f_{m_0}+\phi_{j_{m_0}}) f_{j_{m_0}-1} \cdots f_{j_1+1} (f_{j_1}+\phi_{j_1}) f_{j_1-1} \cdots f_1\\
        &=g(j_1,\cdots, j_{m_0}) + \sum_{\substack{\{p_1 < \cdots < p_t\} \subseteq \{j_1 < \cdots < j_{m_0}\}\\ t<m_0}} g(p_1,\cdots,p_t)
    \end{align*}

    By the minimality of $m_0$, each term $g(p_1,\cdots,p_t)$ from the summation above is zero, and so we have $h_n \cdots h_1 = g(j_1,\cdots, j_{m_0}) \neq 0$. Moreover, we have 
    
    $$g(j_1,\cdots, j_{m_0}) = f_n \cdots f_{j_{m_0}+1} \phi_{j_{m_0}} f_{j_{m_0}-1} \cdots f_{j_1+1} \phi_{j_1} f_{j_1-1} \cdots f_1$$ 
    
    with $m_0 > 1$ and $\phi_{j} \in \rad^2(X_{j-1},X_j)$ for every $j$. Therefore $h_n \cdots h_1 = g(j_1,\cdots, j_{m_0}) \in \rad^{n+1}(X_0,X_n)$.
\end{proof}

\begin{obs}
    It is important to detail our references for the theorem above. \cite{CT}, Theorem 2.7 proves $(2) \Rightarrow (1)$, and proves $(1) \Rightarrow (2)$ in the particular case where $k$ is algebraically closed and $\Gamma$ is a standard component with trivial valuation. \cite{CMT1}, Proposition 5.1 drops the assumption that $\Gamma$ is standard for $(1) \Rightarrow (2)$. Eventually, \cite{CMT2}, Proposition 3 states $(1) \Leftrightarrow (2)$ with the same generality as here. Parallel to that, \cite{CT} proves $(1) \Rightarrow (3)$ through its Lemma 2.8 and Corollary 2.9, although their statement is weaker than what we do here. We did not find the implication $(3) \Rightarrow (1)$ in literature, but we have written the proof of it above inspired by the argument in the second part of the proof of \cite{CT}, Theorem 2.7. In summary, Theorem~\ref{th:inf ou atalhos} above gives a unified statement of some results or arguments by C. Chaio, P. Le Meur and S. Trepode, which were originally stated in multiple sources or not explicitly stated.
\end{obs}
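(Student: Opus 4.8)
The plan is to treat this remark as a matter of bibliographic \emph{verification} rather than as a statement admitting a formal proof: what is asserted is the provenance of the individual implications of Theorem~\ref{th:inf ou atalhos} and the degree of generality at which each was previously available, not a mathematical property. Remarks of this kind carry no proof environment, so the appropriate task is to justify each attribution by locating the cited statement in its source, translating its hypotheses into the notation used here, and confirming that it does (or does not, in the case of $(3) \Rightarrow (1)$) match the corresponding implication of Theorem~\ref{th:inf ou atalhos}. I would proceed claim by claim.

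First I would treat the equivalence $(1) \Leftrightarrow (2)$, which the remark distributes across three sources. I would confirm that \cite{CT}, Theorem 2.7 yields $(2) \Rightarrow (1)$ unconditionally, but secures $(1) \Rightarrow (2)$ only under the restrictions that $k$ is algebraically closed and $\Gamma$ is standard with trivial valuation; that \cite{CMT1}, Proposition 5.1 removes the standardness requirement; and that \cite{CMT2}, Proposition 3 reaches exactly the present generality ($k$ perfect, $\Gamma$ an arbitrary component). The point requiring care is that each later source genuinely subsumes the hypotheses of the earlier one, so that the chain of attributions is both accurate and non-redundant. Next I would address $(1) \Rightarrow (3)$, checking that \cite{CT}, Lemma 2.8 together with Corollary 2.9 furnish this implication, and substantiating the claim that their formulation is \emph{weaker} by pinpointing which features of our conclusion — the explicit shortcut data $i_1 < \cdots < i_l$, or the dichotomy separating the $\rad^{\infty}$ case (a) from the shortcut case (b) — are not made explicit there.

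The main obstacle is the assertion that $(3) \Rightarrow (1)$ ``was not found in the literature.'' This is a negative existential claim and therefore cannot be established by a proof in the usual sense; the best available substitute is to document a systematic search of the relevant corpus (\cite{CT,CMT1,CMT2,CMT3} and adjacent work) and to confirm that the self-contained argument given above for $(3) \Rightarrow (1)$ is genuinely new while remaining faithful to its declared inspiration in the second half of the proof of \cite{CT}, Theorem 2.7. With these individual checks in place, the closing assertion of the remark — that Theorem~\ref{th:inf ou atalhos} offers a unified statement of results of Chaio, Le Meur and Trepode that were previously scattered across several papers or left implicit — follows as the cumulative summary of the verifications, and needs no separate justification beyond them.
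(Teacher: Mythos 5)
Your proposal is correct and matches the paper's treatment: the remark is purely bibliographic, the paper supplies no proof environment for it, and your claim-by-claim verification of the attributions (including recognizing that the non-existence claim for $(3) \Rightarrow (1)$ can only be documented by a literature search, not proved) is exactly the appropriate and intended justification. Nothing further is needed.
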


Observe that it follows from the argument we gave as we proved $(1) \Rightarrow (2)$ of Theorem~\ref{th:inf ou atalhos} the following corollary:

\begin{cor}[from the proof of Theorem~\ref{th:inf ou atalhos}]
\label{cor:inf ou atalhos}

Let $A$ be a finite dimensional algebra over a perfect field $k$ and $\Ga$ be a component of $\Ga(\md A)$. Let $F:k(\Delta) \rightarrow \ind \Gamma$ be a Riedtmann functor, where $\pi: \Delta \rightarrow \Gamma^v$ is a covering. Given a path of irreducible morphisms $X_0 \xrightarrow{h_1} X_1 \xrightarrow{h_2} \cdots \xrightarrow{h_n} X_n$ between modules from $\Ga$, let $x_0 \rightarrow x_1 \rightarrow \cdots \rightarrow x_n$ be a path over $\Delta$ such that $\pi(x_i) = X_i$ for every $0 \leq i \leq n$. Consider $\overline{h_i}$ as an element of $k(\Delta)_1(x_{i-1},x_i)$ for each $i$. With these notations, the following are equivalent:

\begin{enumerate}
    \item $h_n \cdots h_1 \in \rad^{n+1}$;
    \item $F(\overline{h_n}) \cdots F(\overline{h_1}) \in \rad^{n+1}$;
    \item $\overline{h_n} \cdots \overline{h_1} \in \mathcal{R}^{n+1} k(\Delta)$;
    \item $\overline{h_n} \cdots \overline{h_1} = 0$ in $k(\Delta)$; and
    \item $F(\overline{h_n}) \cdots F(\overline{h_1}) = 0$.
\end{enumerate}

\end{cor}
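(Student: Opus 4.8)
The plan is to establish the equivalences as a cycle of implications, leveraging two main tools already available: the fact that $F$ is a Riedtmann functor (so that $h_i - F(\overline{h_i}) \in \rad^2$ for each irreducible $h_i$) and Theorem~\ref{th:b19}, which gives the crucial bijection between graded pieces of the radical filtration in the mesh category and in the module category. The equivalences $(3) \Leftrightarrow (4)$ are essentially immediate: since $k(\Delta)$ is $\mathbb{N}$-graded (Remark~\ref{obs:categoria graduada}) and $\overline{h_n} \cdots \overline{h_1}$ is a homogeneous element of degree exactly $n$, it lies in $\Rr^{n+1} k(\Delta) = \bigoplus_{i \geq n+1} k(\Delta)_i$ if and only if its degree-$n$ component vanishes, i.e. if and only if it is zero. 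This is where the grading does the real work, collapsing ``lies in a higher power'' to ``is zero''.

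Next I would handle $(2) \Leftrightarrow (3)$ and $(2) \Leftrightarrow (5)$ using Theorem~\ref{th:b19}. The composite $F(\overline{h_n}) \cdots F(\overline{h_1}) = F(\overline{h_n} \cdots \overline{h_1})$, and since $\overline{h_n} \cdots \overline{h_1} \in \Rr^n k(\Delta)(x_0, x_n)$, Theorem~\ref{th:b19} tells us that the induced map on the quotient $\Rr^n k(\Delta)/\Rr^{n+1} k(\Delta) \to \rad^n/\rad^{n+1}$ (summed over the fibre of $x_n$) is a bijection. Consequently $F(\overline{h_n} \cdots \overline{h_1})$ lands in $\rad^{n+1}$ precisely when $\overline{h_n} \cdots \overline{h_1}$ lands in $\Rr^{n+1} k(\Delta)$, giving $(2) \Leftrightarrow (3)$; and by the same bijectivity, $F$ applied to the class is zero in the quotient exactly when the class itself is zero, so $(3) \Leftrightarrow (4)$ transports to $(2) \Leftrightarrow (5)$, noting that $F(\overline{h_n})\cdots F(\overline{h_1}) \in \rad^{n+1}$ is the same as its class in $\rad^n/\rad^{n+1}$ vanishing.

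The remaining link is $(1) \Leftrightarrow (2)$, which is where the Riedtmann property of $F$ is used most directly, and which I expect to be the main obstacle. The idea is to write each $h_i = F(\overline{h_i}) + r_i$ with $r_i \in \rad^2(X_{i-1}, X_i)$, expand the product $h_n \cdots h_1$ multilinearly, and observe that any term containing at least one factor $r_i$ lies in $\rad^{n+1}$ (since replacing an irreducible factor by a $\rad^2$ factor raises the total radical degree of that term by at least one). Hence $h_n \cdots h_1$ and $F(\overline{h_n}) \cdots F(\overline{h_1})$ differ by an element of $\rad^{n+1}(X_0, X_n)$, from which $(1) \Leftrightarrow (2)$ follows at once: one belongs to $\rad^{n+1}$ if and only if the other does. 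The delicate point to state carefully is precisely this degree-counting argument for the mixed terms, and the fact that it only shows equality modulo $\rad^{n+1}$ rather than a cleaner identity; but since both $(1)$ and $(2)$ are membership statements in $\rad^{n+1}$, this suffices. Assembling the cycle $(1) \Leftrightarrow (2) \Leftrightarrow (3) \Leftrightarrow (4)$ together with $(2) \Leftrightarrow (5)$ then yields all five equivalences.
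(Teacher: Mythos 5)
Your proof is correct and takes essentially the same route as the paper: the Riedtmann property gives $h_i - F(\overline{h_i}) \in \rad^2$, the multilinear expansion with degree counting yields $(1)\Leftrightarrow(2)$, Theorem~\ref{th:b19} transports $(2)$ to $(3)$, and the $\mathbb{N}$-grading of $k(\Delta)$ collapses $(3)$ to $(4)$, with $(5)$ closing the cycle trivially. The only cosmetic difference is that the paper (inside the proof of Theorem~\ref{th:inf ou atalhos}) uses a three-term decomposition $h_i = F(\overline{h_i}) + h_{i2} + h_{i3}$ in order to also isolate a $\rad^{\infty}$ component needed for statement (3) of that theorem; for the corollary itself your two-term decomposition is all that is required.
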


\subsection{A new proof for the Igusa-Todorov Theorem}
\label{subsec:new proof IT}

As we stated in the introduction, the Igusa-Todorov Theorem is a classical result in compositions of irreducible morphisms. The proof originally given in \cite{IT1} (which can also be found in \cite{AC}, IV.3.5) is elementary, but our last goal here is to show that we can use Riedtmann functors to give a quite more elegant proof in the case where $k$ is perfect:

\begin{proof}[Proof of Theorem~\ref{th:igusa todorov}]
    Consider the universal covering $\pi: \tGa \rightarrow \Ga$ of the component which contains the morphisms $h_1, \cdots, h_n$. Fix a vertex $x_0 \in \pi^{-1}(X_0)$ and consider a lifting of the path $h_n \cdots h_1$ to $\tGa$. If we have $h_n \cdots h_1 \in \rad^{n+1}(X_0,X_n)$, then Corollary~\ref{cor:inf ou atalhos} tells us that $\overline{h_n} \cdots \overline{h_1} = 0$. On the other side, since mesh relations generate linear combinations of paths which are necessarily not sectional, there is no way a sectional path can be identified with zero via mesh relations. That means $\overline{h_n} \cdots \overline{h_1} \neq 0$, a contradiction that concludes the proof.
\end{proof}

\section*{Acknowledgements}

This work is part of the PhD thesis (\cite{C-th}) of first named author, under supervision by the second named author. The authors gratefully acknowledge financial support by S\~ao Paulo Research Foundation - FAPESP (grants \#2020/13925-6 and \#2022/02403-4), and by CNPq (grant Pq 312590/2020-2).

\end{document}